\documentclass[12pt]{article}
\usepackage{graphicx} 
\usepackage{caption} 
\usepackage{epstopdf}
\usepackage{subfigure}
\usepackage{color}

\usepackage[english]{babel}
\usepackage{amsmath,amsthm}
\usepackage{amsfonts}
\usepackage{booktabs}

\usepackage{indentfirst} 
\usepackage{graphicx}
\usepackage{subfigure}
\usepackage{tabularx}
\newtheorem{thm}{Theorem}[section]
\newtheorem{lem}{Lemma}[section]
\newtheorem{cor}{Corollary}[section]
\newtheorem{rem}{Remark}[section]

\newtheorem{defi}{Definition}

\title{The matching book embedding of the $F${-}sum of two graphs}

\author { Zeling Shao, Ruxing Sun, Zhiguo Li{$^*$}\\
	{\small School of Science, Hebei University of Technology, Tianjin 300401, China}
	\date{}
	\footnote{Corresponding author. E-mail: zhiguolee@hebut.edu.cn}
}

\begin{document}
	\baselineskip 0.65cm
	
	\maketitle
	
	\begin{abstract}
		
		The $F$-sum is a new graph operation defined by combining four graph transformation operations with the Cartesian product operation. 
		A \emph{matching book embedding} of a graph $G$ is a book embedding in which the vertices of $G$ are placed on a fixed linear order along the spine, and the edges are assigned to pages such that 
		(i) no two edges on the same page cross, and 
		(ii) each vertex has degree at most one on every page. 
		The minimum number of pages required for such a matching book embedding is called the \emph{matching book thickness} of $G$, denoted by $mbt(G)$. 
		A graph \( G \) is \emph{dispersable} if and only if \( mbt(G) = \Delta(G) \), and \emph{nearly dispersable} if and only if \( mbt(G) = \Delta(G) + 1 \). 
		In this paper, we determine the dispersability of outerplanar graphs and establish an upper bound on the matching book thickness of the $F$-sum of any simple graph with any dispersable bipartite graph.

		\bigskip
		\noindent\textbf{Keywords:} matching book embedding;  matching book thickness; dispersable bipartite graphs; outerplanar graph; $F$-sum
		
		\bigskip
		\noindent\textbf{2020 MR Subject Classification. 05C10}
		
	\end{abstract}

	\section{Introduction}
	
	Bernhart and Kainen$^{[1]}$ first introduced the concept of book embedding of graphs. 
	A \emph{book embedding} of a graph $G$ consists of placing all vertices of $G$ in a fixed linear order along a straight line in three-dimensional space(called the \emph{spine}) and assigning each edge to a half-plane (called \emph{page}) bounded by the spine, such that no two edges assigned to the same page cross. 
	If, in addition, every vertex has degree at most one in each page, the method is called a \emph{matching book embedding}. 
	The minimum number of pages required for such a matching book embedding is known as the \emph{matching book thickness} of $G$, denoted by $\operatorname{mbt}(G)$.
	A graph $G$ is said to be \emph{dispersable} if $mbt(G) = \Delta(G)$, and \emph{nearly dispersable} if $mbt(G) = \Delta(G) + 1$.
	Equivalently, a graph $ G $ with edge chromatic number $ k $ and maximum vertex degree $ \Delta(G) = k $ is \emph{dispersable} if and only if all vertices of $ G $ can be placed on a cycle (called \emph{printing cycle}) and its edges are represented as chords of the cycle, such that $ G $ admits a $ k $-page matching book embedding where all edges on each page share the same color. Overbay studied the dispersability of complete graphs, complete bipartite graphs, trees, even cycles, and $ n $-cubes$^{[2]}$. Currently, the matching book embedding problems for various graph families have been extensively studied$^{[6-11]}$.
	
	In 2008, Eliasi and Taeri$^{[3]}$ introduced four new graph operations. 
	Based on these operations and the Cartesian product, they proposed the $F$-sum of two graphs $G_1$ and $G_2$, whose vertex set is defined as $(E(G_1) \cup V(G_1)) \square V(G_2)$.
	
	In this paper, we determine the dispersability of outerplanar graphs and establish an upper bound on the matching book thickness of the $F$-sum of any simple graph and any dispersable bipartite graph.

	\section{Preliminaries}
	
	In this section, we present some definitions and results that will be used in the following sections.

	\begin{defi}$^{[3]}$
		For a connected graph $G$, define four related graphs as follows (see Fig. 1):
		
		$(a)$ $S(G)$ is the graph obtained by inserting an additional vertex in each edge of $G$. Equivalently, each edge of $G$ is replaced by a path of length 2.
		
		$(b)$ $R(G)$ is obtained from $G$ by adding a new vertex corresponding to each edge of $G$, then joining each new vertex to the end vertices of the corresponding edge.
		
		$(c)$ $Q(G)$ is obtained from $G$ by inserting a new vertex into each edge of $G$, then joining with edges those pairs of new vertices on adjacent edges of $G$.
		
		$(d)$ $T(G)$ has as its vertices the edges and vertices of $G$. Adjacency in $T(G)$ is defined as adjacency or incidence for the corresponding elements of $G$. (see Fig.~1 for $S_3$ and $F(S_3)$)
	\end{defi}

	\begin{figure}[htbp]
		\centering
		\includegraphics[height=2.8cm, width=0.9\textwidth]{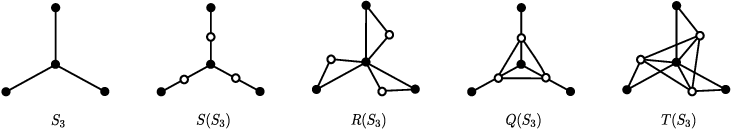}
		
		\centerline{Fig.~1 $S_3, S(S_3), R(S_3), Q(S_3), T(S_3)$}
	\end{figure}

	\begin{defi}$^{[4]}$
		Let $F \in \{S, R, Q, T\}$. The $F$-sum of $G_1$ and $G_2$, denoted by $G_1 {+_F} G_2$, is defined by $F(G_1) \square G_2 - E^*$, where $E^* = \{(u, v_1)(u, v_2) \in E(F(G_1) \square G_2) : u \in V(F(G_1)) - V(G_1), v_1v_2 \in E(G_2)\}$, i.e., $G_1 +_F G_2$ is a graph with the set of vertices $V(G_1 +_F G_2) = (V(G_1) \cup E(G_1)) \square V(G_2)$ and two vertices $(u_1, u_2)$ and $(v_1, v_2)$ of $G_1 +_F G_2$ are adjacent if and only if $[u_1 = v_1 \in V(G_1) \text{ and } u_2v_2 \in E(G_2)]$ or $[v_2 = v_2 \in V(G_2) \text{ and } u_1v_1 \in E(F(G_1))]$.
	\end{defi}

	Observe that $ G_1 +_F G_2 $ consists of $ |V(G_2)| $ copies of the graph $ F(G_1) $, each associated with a vertex of $ G_2 $. In each copy, the vertices are of two types: those corresponding to the vertices of $ G_1 $, referred to as black vertices, and those corresponding to the edges of $ G_1 $, referred to as white vertices. We then connect two black vertices that share the same name in $ F(G_1) $ if and only if their corresponding vertices in $ G_2 $ are adjacent$^{[3]}$. (See Fig.~2 for an illustration of $ S_3 +_F P_2 $.)
	
	\begin{figure}[htbp]
		\centering
		\includegraphics[height=12.5cm, width=1\textwidth]{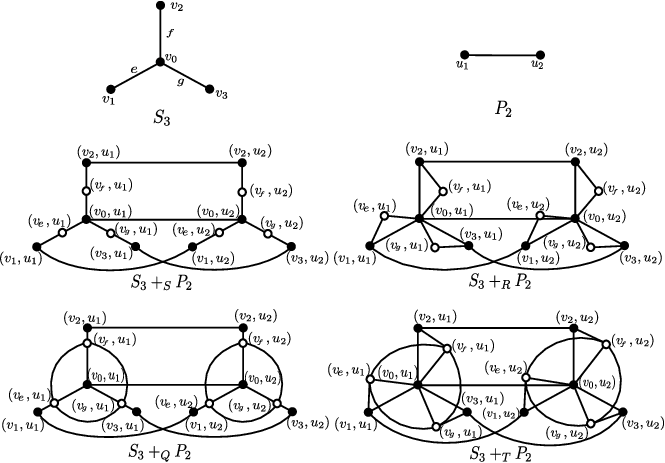}
		
		\centerline{Fig.~2 $S_3 +_F P_2$}
	\end{figure}
	
	In this paper, when we present the vertex ordering on the spine, we assign the color black to the vertices in $ V(G_1) $, and white to the vertices in $ V(F(G_1)) - V(G_1) $, which correspond to the edges of $ G_1 $.

	\begin{lem}$^{[2]}$
		For any simple graph $G$, $\Delta(G)\leq \chi^{\prime }(G) \leq mbt(G),$ where $\chi^{\prime }(G)$ is the chromatic index of $G$.
	\end{lem}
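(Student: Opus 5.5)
The statement consists of two inequalities, $\Delta(G)\le\chi'(G)$ and $\chi'(G)\le mbt(G)$. I will prove them separately; both follow directly from the definitions of a proper edge colouring and of a matching book embedding.

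For the first inequality, take a vertex $v$ with $\deg(v)=\Delta(G)$. The $\Delta(G)$ edges incident with $v$ pairwise share the endpoint $v$. In any proper edge colouring they must therefore receive pairwise distinct colours. Hence every proper edge colouring uses at least $\Delta(G)$ colours, and so $\chi'(G)\ge\Delta(G)$.

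For the second inequality, fix a matching book embedding of $G$ with $k=mbt(G)$ pages, and colour each edge by the index of the page it is assigned to. Condition (ii) of the definition says every vertex has degree at most one on each page. Hence the edges on a single page form a matching, so no two edges of the same colour share an endpoint. This is a proper edge colouring with $k$ colours, which gives $\chi'(G)\le k=mbt(G)$.

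Both steps are routine; the only thing to check is that a page of a matching book embedding is a matching, and that is exactly condition (ii). The non-crossing condition (i) is not needed here. It is what makes the inequality $\chi'(G)\le mbt(G)$ strict for some graphs, but that plays no role in this statement.
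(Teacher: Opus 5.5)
Your proof is correct. The paper gives no proof of its own and simply cites Overbay [2]. Your argument is the standard one behind that citation: a vertex of degree $\Delta(G)$ forces $\Delta(G)$ distinct colours, and by condition (ii) each page of a matching book embedding is a matching, so assigning edges their page index gives a proper edge colouring with $mbt(G)$ colours.
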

	
	\begin{lem}$^{[2]}$
		For a regular graph \( G \), \( G \) is dispersable only if \( G \) is bipartite.
	\end{lem}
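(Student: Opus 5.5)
The statement to prove is Lemma 2.2: if a regular graph $G$ is dispersable, then $G$ is bipartite. I will argue by contraposition. Let $G$ be $k$-regular and non-bipartite, and suppose, for contradiction, that $G$ has a matching book embedding with $mbt(G)=k$ pages. Each page is a matching. There are $|E(G)|=kn/2$ edges spread over $k$ pages, and each page has at most $n/2$ edges. Hence every page must be a \emph{perfect matching}. In particular $n$ is even, and every vertex meets exactly one edge on each page.

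The key step is a structural fact about the spine order. Fix the linear order $v_1,\dots,v_n$ and one page, which carries a non-crossing perfect matching $M$. I claim every edge $v_iv_j\in M$ with $i<j$ has $j-i$ odd. Every vertex strictly between $v_i$ and $v_j$ is matched within $M$. Since $M$ is non-crossing, its partner must also lie strictly between $v_i$ and $v_j$. So the $j-i-1$ vertices in that interval are paired among themselves, which forces $j-i-1$ to be even.

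Now colour $v_i$ by the parity of $i$. Every page is a non-crossing perfect matching, so by the claim every edge of every page joins an odd-indexed vertex to an even-indexed one. The edges on all pages together make up $E(G)$. Hence this parity colouring is a proper $2$-colouring of $G$, so $G$ is bipartite. This contradicts the assumption, so $mbt(G)\ge k+1$ and $G$ is not dispersable.

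The main point to get right is the first step: regularity together with exactly $\Delta$ pages forces every page to be perfect. For this I use $|E|=kn/2$ and the bound of at most $n/2$ edges per matching; Lemma 2.1 supplies $mbt(G)\ge\Delta$. It is perfectness that makes the interval-parity argument work, because the matching could fail to cover vertices inside an interval if it were not perfect. A minor check is the disconnected case, which is covered by the same argument since nothing above uses connectivity.
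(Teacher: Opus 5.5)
Your proof is correct. Regularity with exactly $\Delta(G)$ pages forces every page to be a non-crossing perfect matching, such a matching only joins spine positions of opposite parity, and so the parity of the spine position is a proper $2$-colouring. The paper gives no proof of its own (it quotes the lemma from [2]), and your parity argument is, as far as I know, the standard proof of this fact in the literature.
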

	
	\begin{lem}$^{[5]}$
		An outerplanar graph \( G \) is in class 1 unless \( G \) is an odd cycle.
	\end{lem}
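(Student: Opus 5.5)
The plan is a minimal-counterexample argument. First, the case $\Delta(G)\le 2$ is immediate: every component of $G$ is then a path or a cycle, and paths and even cycles are $\Delta$-edge-colourable. So the only obstruction is an odd cycle. Here I read the statement as saying that $G$ is in class 1 unless $G$ (or, for disconnected $G$ with $\Delta=2$, one of its components) is an odd cycle. From now on assume $\Delta=\Delta(G)\ge 3$. Let $G$ be a counterexample with the fewest edges among outerplanar graphs with this maximum degree. Deleting any edge keeps the graph outerplanar, and by Lemma 2.1 every graph has $\chi'\ge\Delta$. Hence $G$ is connected and $\Delta$-critical: $\chi'(G)=\Delta+1$, but $\chi'(G-e)=\Delta$ for every edge $e$.

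Second, I import Vizing's Adjacency Lemma for critical graphs. If $uv\in E(G)$, then $u$ has at least $\Delta-d(v)+1$ neighbours of degree $\Delta$. Two consequences follow. First, $\delta(G)\ge 2$, since a pendant vertex would need its neighbour to have $\Delta$ neighbours of degree $\Delta$, which is impossible. Second, if $d(v)=2$, then every neighbour $u$ of $v$ has all of its neighbours other than $v$ of degree $\Delta$. In particular no two vertices of degree $2$ are adjacent.

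Third, I use outerplanar structure to find a small configuration. Take a leaf block $B$ of $G$. Since $\delta\ge2$, $B$ is a $2$-connected outerplanar graph, that is, a Hamiltonian cycle with non-crossing chords. If $B$ has chords, choose a leaf face of its weak dual. Its boundary is a path $x_0x_1\cdots x_k$ on the outer cycle, closed by the chord $x_0x_k$. All of $x_1,\dots,x_{k-1}$ have degree $2$ in $G$, except possibly the cut vertex joining $B$ to the rest, and a leaf block contains at most one such vertex. Choosing two leaf faces (or the two ends of the block tree) avoids the cut vertex. If $k\ge 3$ we get two adjacent degree-$2$ vertices, a contradiction. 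If $B$ is a cycle, $\Delta\ge3$ again forces two adjacent degree-$2$ vertices unless $B$ is a triangle. So the remaining configuration is a triangle $x_0x_1x_2$ with $d(x_1)=2$. By the adjacency lemma, $d(x_0)=d(x_2)=\Delta$.

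Finally, I kill the triangle configuration by a recolouring argument, which I expect to be the main obstacle. Colour $G-x_1$ with $\Delta$ colours. Now $x_0$ and $x_2$ each have degree $\Delta-1$ there, so each misses exactly one colour, say $\alpha$ at $x_0$ and $\beta$ at $x_2$. If $\alpha\ne\beta$, colour $x_0x_1$ with $\alpha$ and $x_1x_2$ with $\beta$, and we are done. If $\alpha=\beta$, let $\gamma$ be the colour of the edge $x_0x_2$. Recolour $x_0x_2$ with $\alpha$; this is proper, because $\alpha$ is missing at both ends. Then $x_0$ and $x_2$ both miss $\gamma$, so only one of them can use it. To fix this, pick a colour $\delta\ne\alpha,\gamma$, which exists since $\Delta\ge3$. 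Apply a Kempe swap on the $(\gamma,\delta)$-chain starting at $x_0$. If this chain does not end at $x_2$, the missing colours at $x_0$ and $x_2$ become distinct. If it does end at $x_2$, use the $(\gamma,\delta')$-chain for another colour $\delta'$ instead. If every such chain joins $x_0$ to $x_2$, I would derive a contradiction either from the parity of alternating paths or by using outerplanarity to show that these paths cannot all avoid crossing the triangle's region. The delicate case is $\Delta=3$, where there is only one choice of $\delta$. There I would fall back on the stronger fact from step three that all other neighbours of $x_0$ and $x_2$ have degree $\Delta$, and pick the leaf face more carefully (e.g.\ two disjoint ears) to get a second reducible configuration.
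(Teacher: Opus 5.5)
The paper gives no proof of this statement: Lemma~2.3 is simply quoted from reference [5], so your argument has to stand on its own, and it has a genuine gap in Step~4.

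\textbf{Steps 1--3 are essentially right.} Two small repairs are needed. First, when deleting $e$ lowers the maximum degree, you need Vizing's theorem rather than minimality to get $\chi'(G-e)\le\Delta$. Second, a triangle leaf block already contains two adjacent non-cut $2$-vertices, so it needs no special case. With these, Steps 1--3 correctly leave you with a triangle $x_0x_1x_2$ where $d(x_1)=2$, $d(x_0)=d(x_2)=\Delta$, and all other neighbours of $x_0,x_2$ have degree $\Delta$.

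\textbf{Step 4 fails.} This configuration is \emph{not} reducible by any argument that uses only a $\Delta$-colouring of $G-x_1$ plus local information, so no Kempe-chain or parity argument can close it. Take $K_4-ab$ on $\{a,b,c,d\}$ and add $x_0a$, $x_2b$ and the triangle $x_0x_1x_2$.
\begin{itemize}
\item Every degree is $3$ except $d(x_1)=2$, and every consequence of the adjacency lemma you derived holds; the graph is even $3$-critical.
\item It has $10>3\lfloor 7/2\rfloor$ edges, so it is overfull and $\chi'=4$.
\item Yet $G-x_1$ is $3$-edge-colourable: $x_0a,x_2b,cd\mapsto1$, $x_0x_2,ac,bd\mapsto2$, $ad,bc\mapsto3$.
\item Both $x_0$ and $x_2$ miss $3$, and every chain you could use ends at $x_2$. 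The $(3,1)$-chain is $x_0adcbx_2$ and the $(3,2)$-chain is the edge $x_0x_2$. After your recolouring $x_0x_2\mapsto3$, the $(2,1)$-chain is $x_0acdbx_2$.
\end{itemize}
For $\Delta=4$, the graph $K_4$ on $\{a,b,c,d\}$ plus $x_0a,x_0b,x_2c,x_2d$ and the triangle behaves the same way. These graphs contain a subdivision of $K_4$, so the contradiction must come from outerplanarity used globally.

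Your closing remark does not provide that. An $x_0$--$x_2$ path that closes up with $x_0x_1x_2$ into a cycle is perfectly compatible with outerplanarity: the rest of the outer cycle of the block is such a path. The ``second reducible configuration'' for $\Delta=3$ is never specified.

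\textbf{A possible repair.} For $\Delta=3$, use minimality in the minor-closed class instead of Kempe chains.
\begin{itemize}
\item If $w_0\ne w_2$ are the third neighbours of $x_0$ and $x_2$, contract the triangle to a vertex $z$ adjacent only to $w_0,w_2$.
\item The result is a simple outerplanar graph with fewer edges, hence $3$-edge-colourable (by minimality, or trivially if its maximum degree drops).
\item The edges $zw_0,zw_2$ receive colours $p\ne q$; let $r$ be the third colour.
\item Then $x_0w_0=p$, $x_2w_2=q$, $x_0x_2=r$, $x_0x_1=q$, $x_1x_2=p$ extends the colouring.
\item The case $w_0=w_2$ is easier: colour $G-\{x_0,x_1,x_2\}$ and finish by hand.
\end{itemize}
In the example above, this contraction produces $K_4$ with one edge subdivided, which is overfull. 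That is exactly why no local argument could succeed there.

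For $\Delta\ge4$ you still need a genuinely different argument. One option is to examine the face on the other side of the chord $x_0x_2$. There, the adjacency-lemma constraints (a neighbour of a $2$-vertex has degree $\Delta$ and no other neighbour of degree $<\Delta$) clash with the small degrees that outerplanarity forces.
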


	\begin{lem}$^{[6]}$
		Let \( G = C(\mathbb{Z}_n, \{1, k\}) \), where \( n \) and \( k \) are both even, then \( mbt(G) = \Delta(G) + 1 \).
	\end{lem}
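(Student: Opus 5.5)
The statement is the cited result of [6], and the plan is to prove it by matching a lower bound with a construction. Throughout, write $G=C(\mathbb{Z}_n,\{1,k\})$ with $2\le k<n/2$, so that $G$ is $4$-regular. The case $k=n/2$ gives a cubic graph and would be handled in the same way with one page fewer.

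\emph{Lower bound.} I will show that $G$ is regular and not bipartite, so Lemma 2.2 gives $mbt(G)\ge \Delta(G)+1$.
\begin{itemize}
\item Regularity is immediate from the definition of a circulant.
\item For non-bipartiteness, take the path $0,1,2,\dots,k$ formed by $k$ steps of $+1$. Closing it with the chord $k\,0$ gives a cycle of length $k+1$.
\item Since $k$ is even, $k+1$ is odd, so $G$ contains an odd cycle.
\end{itemize}
Combined with Lemma 2.1, this shows $G$ is not dispersable. This half is routine.

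\emph{Upper bound, cycle edges.} I will place the vertices on the spine in the natural order $0,1,\dots,n-1$ and aim for $\Delta+1=5$ pages.
\begin{itemize}
\item Page $P_1$ takes the rim edges $\{2j,2j+1\}$.
\item Page $P_2$ takes the rim edges $\{2j+1,2j+2\}$ for $0\le j\le n/2-2$, together with the outer edge $\{n-1,0\}$.
\item This works because $n$ is even. The edge $\{n-1,0\}$ nests around everything, and its endpoints $0$ and $n-1$ are not used by the other $P_2$ edges. So $P_1$ and $P_2$ are non-crossing matchings.
\end{itemize}

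\emph{Upper bound, chords.} The $n$ chords $\{i,i+k\}$ (indices mod $n$) must be split into three non-crossing matchings $P_3,P_4,P_5$.
\begin{itemize}
\item A non-wrapping chord is one with $i<n-k$; it spans exactly $k$ positions. Two such chords $\{i,i+k\}$ and $\{j,j+k\}$ cross exactly when $0<|i-j|<k$.
\item A chord with $i\ge n-k$ wraps around as $\{i+k-n,\,i\}$.
\item The matching condition only forbids two chords sharing a vertex. Chords $\{i,i+k\}$ and $\{i+k,i+2k\}$ touch, so the chord graph is a union of $\gcd(n,k)$ cycles of length $n/\gcd(n,k)$.
\end{itemize}
The first thing I will try is to partition the index set $\mathbb{Z}_n$ into consecutive blocks of length $k$: $B_t=\{tk,\dots,tk+k-1\}$, plus a remainder block. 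Chords starting in the same block pairwise cross, so instead I will use nesting. In a block pair $B_t\cup B_{t+1}$, the chords $\{tk+s,\,(t+1)k+(k-1-s)\}$ would nest, but these are not chords of $G$. So the grouping must be done differently.

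I will exploit that $k$ is even. Each chord can be shifted against the rim ordering by reflecting alternate blocks, that is, reversing the spine order inside every other block of length $k$. I will then check that the chords between consecutive blocks become nested families. Chords whose starting blocks have the same index parity can then share a page without touching, because blocks $B_t$ and $B_{t+2}$ share no chord endpoints. If the alternate-block reflection is used, the rim edges must be re-routed. The rim then becomes a zig-zag that can still be 2-paged, because reflection preserves adjacency of consecutive vertices except at block boundaries, and those boundary edges nest.

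\emph{Main obstacle.} This is the step I expect to be hardest: simultaneously keeping the rim on two pages and the chords on three, especially the wrap-around chords near $0$ and $n-1$ and the remainder block when $k\nmid n$. I expect a case split on $n \bmod 2k$. In each case I will handle the few exceptional chords by assigning them to the third chord page, checking vertex-disjointness and non-crossing explicitly.
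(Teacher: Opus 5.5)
The paper does not prove this lemma. It quotes it from [6], so your argument has to stand on its own.

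\textbf{Lower bound.} This part is fine. $G$ is regular, and the cycle $0,1,\dots,k,0$ has odd length $k+1$. So Lemma~2.2 rules out dispersability, and Lemma~2.1 gives $mbt(G)\ge\Delta(G)+1$.

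\textbf{Upper bound: the rim claim is false.} The upper bound is where the proposal breaks down. Apart from being left as an expectation (``I expect a case split on $n \bmod 2k$''), it rests on a false claim: after reversing every other block, the block-boundary rim edges do not nest. Number spine positions $0,\dots,n-1$, with $B_t$ occupying positions $tk,\dots,tk+k-1$. Suppose $B_t$ is in natural order and $B_{t+1}$ is reversed.
\begin{itemize}
\item The rim edge $\{(t+1)k-1,(t+1)k\}$ sits at positions $(t+1)k-1$ and $(t+2)k-1$.
\item The next boundary edge $\{(t+2)k-1,(t+2)k\}$ sits at positions $(t+1)k$ and $(t+2)k$.
\end{itemize}
These intervals interleave, so the two edges cross. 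The rim is an even cycle, so putting it on two matching pages forces the alternating 2-edge-coloring. Because $k$ is even, every boundary edge $\{jk-1,jk\}$ falls in the same class, namely the edges $\{i,i+1\}$ with $i$ odd. Hence in your reflected order the rim needs three pages. Already for $n=8$, $k=2$ and the order $0,1,3,2,4,5,7,6$, the edges $\{1,2\}$ and $\{3,4\}$ cross.

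\textbf{What survives.} Your idea still works when $2k\mid n$, but with the budget reversed.
\begin{itemize}
\item The chords between $B_t$ and $B_{t+1}$ form a nested family.
\item Families with $t$ of the same parity are pairwise vertex-disjoint and non-crossing; the odd class includes the wrap-around family between the last block and $B_0$, which nests around the others. So the chords need only two pages.
\item The rim then uses three pages: the spine-consecutive edges $\{i,i+1\}$ with $i$ even, plus the odd-indexed edges split by the parity of their block boundary.
\end{itemize}
This gives $\Delta(G)+1$ pages whenever $2k\mid n$, including $k=n/2$, where there is a single chord page.

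\textbf{The gap.} For every other case you give no construction, for example $n=12,k=4$ or $n=10,k=4$. Your fallback of putting the exceptional chords on ``the third chord page'' is not available, because that page is exactly the one the rim consumes. Any leftover chords would have to fit into the partially used rim pages without crossing the long boundary edges there, and nothing in the proposal shows this can be done. As written, you have proved $mbt(G)\ge\Delta(G)+1$ in general, but the matching upper bound only when $2k\mid n$. The general upper bound still needs an explicit vertex ordering and page assignment.
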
	
		
		
	\section {Proof of the main theorem}

	To clarify the vertex relationship between \( F(G) \) and \( G \), we first present the following four lemmas.
	All graphs discussed in this section are simple graphs.
	
	\begin{lem}
		For \( S(G) \) with \( |V(G)| \geq 3 \), we have \( \Delta(S(G)) = \Delta(G) \), and there is always a vertex of maximum degree that is black.
	\end{lem}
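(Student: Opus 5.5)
The plan is to compute the degree of every vertex of $S(G)$ directly from the construction in Definition~1(a), separating black and white vertices. Then I will use the hypotheses that $G$ is connected (as Definition~1 assumes) and $|V(G)|\geq 3$ to compare the two kinds of degrees.

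First, take a black vertex $v\in V(G)$. In $S(G)$, each edge $vw$ of $G$ is replaced by a path $v\,x_{vw}\,w$. So $v$ loses its neighbour $w$ and gains exactly one neighbour, the white vertex $x_{vw}$, for each incident edge. Distinct edges at $v$ give distinct white vertices, and no other edges are created at $v$. Hence $\deg_{S(G)}(v)=\deg_G(v)$. Second, take a white vertex $x_{uw}$. It is adjacent exactly to the two ends $u,w$ of its edge, so $\deg_{S(G)}(x_{uw})=2$. Combining the two cases gives
\[
\Delta(S(G))=\max\{\Delta(G),\,2\},
\]
provided $G$ has at least one edge.

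The only substantive step, mild as it is, is showing $\Delta(G)\geq 2$. Since $G$ is connected and $|V(G)|\geq 3$, $G$ has an edge $uw$. Some third vertex $z$ is joined by a path to $\{u,w\}$, so the first vertex on that path outside $\{u,w\}$ is adjacent to $u$ or to $w$. That vertex of $\{u,w\}$ then has degree at least $2$ in $G$. Therefore $\max\{\Delta(G),2\}=\Delta(G)$, which gives $\Delta(S(G))=\Delta(G)$. Moreover, any vertex $v$ of $G$ with $\deg_G(v)=\Delta(G)$ is a black vertex of $S(G)$ attaining this maximum, which proves the second claim.

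Finally, I will remark why the hypothesis is needed: for $G=K_2$ one has $S(K_2)=P_3$, whose unique vertex of degree $2$ is white. This example justifies the condition $|V(G)|\geq 3$ and the reliance on connectivity.
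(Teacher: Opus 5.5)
Your proof is correct and takes the same route as the paper, which says only that the claim follows directly from the definition of the $S$-operation; you supply the omitted details (black vertices keep their degree, white vertices have degree $2$, and connectivity with $|V(G)|\geq 3$ forces $\Delta(G)\geq 2$). Your $K_2$ example also correctly shows why the hypothesis $|V(G)|\geq 3$, together with the connectivity assumed in Definition~1, is needed.
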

	
	\begin{proof}
		The conclusion follows directly from the definition of the \( S \)-operation.
	\end{proof}

	\begin{lem}
		For \( R(G) \), we have \( \Delta(R(G)) = 2\Delta(G) \), and there is always a vertex of maximum degree that is black.
	\end{lem}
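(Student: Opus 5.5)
The plan is to compute the degree of every vertex of \(R(G)\) directly from the definition of the \(R\)-operation, treating black and white vertices separately, and then compare the two maxima.

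\emph{Black vertices.} Let \(v \in V(G)\) be a black vertex of \(R(G)\). Its neighbours in \(R(G)\) are of two kinds.
First, every edge \(vu \in E(G)\) is still present in \(R(G)\).
Second, for every edge \(e\) incident with \(v\), \(v\) is joined to the new vertex \(w_e\) inserted for \(e\).
These two neighbour sets are disjoint, and each has size \(d_G(v)\). Hence
\[
d_{R(G)}(v) = 2d_G(v).
\]

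\emph{White vertices.} Let \(w_e\) be the white vertex associated with \(e = xy\). By construction \(w_e\) is joined only to \(x\) and \(y\), so
\[
d_{R(G)}(w_e) = 2.
\]

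\emph{Comparison.} Taking the maximum over black vertices gives \(2\Delta(G)\), attained at any vertex \(v\) with \(d_G(v) = \Delta(G)\). If \(G\) has at least one edge, then \(\Delta(G) \ge 1\), so \(2\Delta(G) \ge 2\), and no white vertex can exceed a black vertex of maximum degree. Therefore
\[
\Delta(R(G)) = \max\{2\Delta(G),\, 2\} = 2\Delta(G),
\]
and the maximum is always attained at a black vertex. When \(\Delta(G) = 1\), white vertices also reach degree 2, but that does not affect the claim, which only asks that some maximum-degree vertex be black. If \(G\) is edgeless, \(R(G) = G\) and both sides are \(0\), so the statement holds trivially.

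There is no genuine obstacle here. The one point needing care is to check that the two neighbour sets of a black vertex are disjoint, so that its degree is exactly doubled. This holds because every new vertex \(w_e\) is distinct from all original vertices.
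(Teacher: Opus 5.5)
Your proof is correct and takes the same approach as the paper, which only says the result follows directly from the definition of the $R$-operation. Your explicit count, $d_{R(G)}(v)=2d_G(v)$ for black vertices and $d_{R(G)}(w_e)=2$ for white vertices, together with the edgeless case, is exactly the verification the paper leaves implicit.
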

	
	\begin{proof}
		The conclusion follows directly from the definition of the \( R \)-operation.
	\end{proof}

	\begin{lem}
		For \( Q(G) \), we have \( \Delta(Q(G)) \geq \Delta(G) + 1 \), and every vertex of maximum degree in \( Q(G) \) is white.
	\end{lem}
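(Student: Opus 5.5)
The plan is a direct degree count in $Q(G)$, separating black and white vertices; since $Q$ is defined for connected $G$, I assume $G$ is connected with at least one edge. First I compute the degree of a black vertex $v\in V(G)$. In $Q(G)$ every edge $e=uv$ of $G$ is subdivided by a new white vertex $w_e$, so the original edge $uv$ no longer exists. The only neighbours of $v$ are the white vertices $w_e$ with $e$ incident to $v$. No edges are added at black vertices, so $\deg_{Q(G)}(v)=\deg_G(v)\le \Delta(G)$.

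Next I compute the degree of a white vertex $w_e$ with $e=uv$. It is adjacent to its two endpoints $u$ and $v$ from the subdivision. It is also adjacent to $w_{f}$ for every edge $f\neq e$ sharing an endpoint with $e$. Since $G$ is simple, no edge $f\ne e$ contains both $u$ and $v$. So there are exactly $(\deg_G(u)-1)+(\deg_G(v)-1)$ such edges, and these neighbours are all distinct. Hence
\[
\deg_{Q(G)}(w_e)=\deg_G(u)+\deg_G(v).
\]

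To conclude, choose a vertex $u$ with $\deg_G(u)=\Delta(G)$ and any edge $e=uv$ at it; this edge exists because $G$ is connected and nontrivial. Then $\deg_{Q(G)}(w_e)\ge \Delta(G)+1$, so $\Delta(Q(G))\ge\Delta(G)+1$. Every black vertex has degree at most $\Delta(G)<\Delta(G)+1\le\Delta(Q(G))$, so no black vertex attains the maximum degree. Hence every vertex of maximum degree is white.

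The only delicate point is the correct count of the white--white adjacencies, i.e.\ not double-counting and not counting $e$ itself. This is where simplicity of $G$ is used. The argument also needs the tacit assumption that $G$ has an edge, which is implicit in the connectedness hypothesis of the definition of $Q(G)$ for nontrivial graphs.
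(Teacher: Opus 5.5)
Your proof is correct and follows the paper's argument: the paper likewise computes $d_{Q(G)}(v_e)=\Delta(G)+d_G(v)$ for the white vertex on an edge at a maximum-degree vertex, and observes that black vertices keep their $G$-degree. Your version is slightly more careful than the paper's. You conclude via $d_G(v)\geq 1$ rather than the paper's ``when $d_G(v)=1$'', and you state explicitly that $G$ must be simple and have at least one edge.
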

	
	\begin{proof}
		Let \( u \) be a vertex of maximum degree in \( G \), and let \( v \) be a neighbor of \( u \). 
		In \( Q(G) \), denote by \( v_e \) the white vertex corresponding to the edge \( uv \). 
		Aside from \( uv \), the vertex \( u \) is incident with \( \Delta(G) - 1 \) other edges, 
		and \( v \) is incident with \( d_G(v) - 1 \) other edges. 
		By the definition of the \( Q \)-operation, the degree of \( v_e \) in \( Q(G) \) is
		\(
		d_{Q(G)}(v_e) = 2 + (\Delta(G) - 1) + (d_G(v) - 1) = \Delta(G) + d_G(v).
		\)
		In particular, when \( d_G(v) = 1 \), we have \( \Delta(Q(G)) \geq \Delta(G) + 1 \). 
		Moreover, by the construction of \( Q(G) \), the degree of the original vertex \( u \) does not increase; 
		therefore, every vertex of maximum degree in \( Q(G) \) must be white.
	\end{proof}
	
	\begin{lem}
		For \( T(G) \), we have \( \Delta(T(G)) = 2\Delta(G) \), and there is always a vertex of maximum degree that is black.
	\end{lem}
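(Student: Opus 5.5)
The plan is to compute the degree of every vertex of \(T(G)\) directly from the definition of the \(T\)-operation, separately for black and white vertices, and then compare the two.

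For a black vertex \(u \in V(G)\), its neighbours in \(T(G)\) are of two kinds. There are the vertices of \(G\) adjacent to \(u\), and there are the edges of \(G\) incident with \(u\). Hence
\(d_{T(G)}(u) = d_G(u) + d_G(u) = 2d_G(u)\).
For a white vertex \(v_e\) corresponding to an edge \(e = xy\), its neighbours are the two end vertices \(x\) and \(y\), together with the edges of \(G\) sharing an end with \(e\). This gives
\(d_{T(G)}(v_e) = 2 + (d_G(x)-1) + (d_G(y)-1) = d_G(x) + d_G(y)\).
This is the same count already used for \(Q(G)\) in the previous lemma, plus the two incidences.

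Now compare. Every black vertex has degree at most \(2\Delta(G)\), and a black vertex \(u\) with \(d_G(u)=\Delta(G)\) attains this value. For a white vertex, \(d_G(x) + d_G(y) \le 2\Delta(G)\). So no vertex exceeds \(2\Delta(G)\), which gives \(\Delta(T(G)) = 2\Delta(G)\). Since a black vertex of maximum degree in \(G\) realizes this maximum, a vertex of maximum degree in \(T(G)\) can always be chosen black. The statement says only that \emph{there is} such a vertex, so it does not matter that white vertices may also attain \(2\Delta(G)\), for instance when both ends of an edge have maximum degree.

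No step is a real obstacle. The only point needing care is to count the white vertex's neighbours correctly: the edge \(e\) itself must not be counted, which is where the \(-1\) terms come from. The bound \(d_G(x)+d_G(y)\le 2\Delta(G)\) then follows immediately.
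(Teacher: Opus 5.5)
Your proposal is correct and follows the paper's proof essentially verbatim. Like the paper, you compute $d_{T(G)}(u)=2d_G(u)$ for black vertices and $d_{T(G)}(v_e)=d_G(x)+d_G(y)\le 2\Delta(G)$ for white vertices, and conclude that a black vertex of maximum degree in $G$ attains $2\Delta(G)$. One aside on your phrase ``plus the two incidences'': the count for $Q(G)$ in the previous lemma already includes the two neighbours $x$ and $y$, so a white vertex has the same degree in $T(G)$ as in $Q(G)$. This does not affect your argument.
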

	
	\begin{proof}
		We first consider a vertex \( v \) in \( G \). By the definition of the \( T \)-operation, we always have
		\(
		d_{T(G)}(v) = 2d_G(v) \leq 2\Delta(G),
		\)
		and when \( v \) is a vertex of maximum degree, \( d_{T(G)}(v) = 2\Delta(G) \).
		
		We continue to consider an edge \( e \) in \( G \), with endpoints denoted by \( u \) and \( v \). 
		In \( T(G) \), let \( v_e \) be the white vertex corresponding to the edge \( e \). 
		Aside from \( e \), the vertex \( u \) is incident with \( d_G(u) - 1 \) other edges, 
		and \( v \) is incident with \( d_G(v) - 1 \) other edges. 
		By the definition of the \( T \)-operation, the degree of \( v_e \) in \( T(G) \) is
		\(
		d_{T(G)}(v_e) = 2 + (d_G(u) - 1) + (d_G(v) - 1) = d_G(u) + d_G(v) \leq 2\Delta(G).
		\)
		Moreover, if both \( u \) and \( v \) are vertices of maximum degree in \( G \), 
		then \( d_{T(G)}(v_e) = 2\Delta(G) \).
		
		We conclude that
		\(
		\Delta(T(G)) = 2\Delta(G),
		\)
		and there is always a vertex of maximum degree in \( T(G) \) that is black 
		(i.e., corresponds to an original vertex of \( G \)).
	\end{proof}
	
	In~\cite{7}, Liu et al.\ proved that every $2$-connected outerplanar graph which is not an odd cycle is dispersable. In this section, we present a simpler method to prove that every outerplanar graph which is not an odd cycle is dispersable.
	
	\begin{thm}
		If $G$ is an outerplanar graph which is not an odd cycle, then $G$ is dispersable.
	\end{thm}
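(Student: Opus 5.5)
The natural approach is to combine Lemma 2.3 with the outerplanar embedding itself. Fix an outerplanar drawing of $G$ and place the vertices on the spine in the cyclic order in which they appear on the boundary of the outer face. If $G$ is disconnected or not 2-connected, the blocks can be handled by traversing the outer boundary walk: list each vertex at its first occurrence. With this order, no two edges of $G$ cross when all edges are drawn as chords of the printing cycle. This is the standard fact that outerplanar graphs have book thickness one, and it holds for the boundary-walk order of a connected outerplanar graph. Components are simply concatenated on the spine.

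The matching book embedding is then obtained by colouring. Since $G$ is not an odd cycle, Lemma 2.3 gives $\chi'(G)=\Delta(G)$, so there is a proper edge colouring $c:E(G)\to\{1,\dots,\Delta(G)\}$. Put every edge of colour $i$ on page $i$. Two edges on the same page never cross, because no two edges cross at all in the chosen order. Each vertex has degree at most one on every page, because $c$ is proper. Hence $mbt(G)\le\Delta(G)$, and Lemma 2.1 gives the reverse inequality, so $mbt(G)=\Delta(G)$.

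The main obstacle is the first step: justifying that a single vertex order makes all edges pairwise non-crossing for outerplanar graphs that are not 2-connected. For a 2-connected outerplanar graph the outer face is a Hamiltonian cycle, and all other edges are chords of it. Two crossing chords would contradict planarity with all vertices on the outer face, since one could then add a vertex in the outer face adjacent to all vertices and obtain a planar $K_4$ minor configuration violating outerplanarity. For the general case I would argue by induction on the number of blocks. Take a leaf block $B$ attached at a cut vertex $v$. Embed $G-(B-v)$ by induction, and insert the vertices of $B-v$, in their cyclic order starting after $v$, as a consecutive interval immediately after $v$ on the spine. Edges of $B$ then lie within the interval $[v,\text{end of }B]$ and are nested relative to the other edges. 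Any edge of the rest of the graph either avoids this interval or has $v$ as an endpoint, so it cannot cross an edge of $B$. Isolated vertices and trees are covered by the same induction, and for trees this recovers Overbay's result. Note that $K_2$ blocks and even cycles are fine, and the only exclusion needed is the odd cycle, where Lemma 2.3 fails.
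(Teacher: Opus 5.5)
Your route is the paper's route: order the vertices along the outer face so that no two edges cross, take the proper $\Delta(G)$-edge-colouring from Lemma~2.3, put each colour class on its own page, and use Lemma~2.1 for the lower bound. The paper only asserts that the crossing-free order exists, and it treats $\Delta(G)\le 2$ separately. Your leaf-block induction is a correct and useful proof of that order for graphs that are not $2$-connected. Your reason in the $2$-connected case is muddled, though. Two interleaving chords together with the outer Hamiltonian cycle form a subdivision of $K_4$, which is not outerplanar. If you add an apex you get a $K_5$ minor, which contradicts planarity; there is no ``planar $K_4$'' contradiction.

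The genuine problem is your explicit extension to disconnected graphs (``components are simply concatenated on the spine'') and your claim that ``the only exclusion needed is the odd cycle.'' That is false. The step that breaks is ``$G$ is not an odd cycle, so Lemma~2.3 gives $\chi'(G)=\Delta(G)$.'' Lemma~2.3 as quoted is valid only for connected graphs; Fiorini's theorem actually says that outerplanar graphs with $\Delta\ge 3$ are class~1. Take the disjoint union $G=C_3\cup K_2$ (or $C_3\cup C_3$). It is outerplanar and not an odd cycle, yet $\chi'(G)=3>2=\Delta(G)$. So $mbt(G)\ge 3$ by Lemma~2.1, and $G$ is not dispersable. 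The paper tacitly assumes $G$ is connected: its Case~1 says $G$ ``must be either a path or a cycle.'' You should either assume connectivity as well, or change the hypothesis to ``$\Delta(G)\ge 3$, or no component of $G$ is an odd cycle.'' With that correction your proof is complete, because your spine-ordering argument does work for disconnected graphs.
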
	
	
	\begin{proof}
		\textbf{Case 1:} $\Delta(G) \leq 2$. In this case, $G$ must be either a path or a cycle. Since $G$ is not an odd cycle, it follows that $G$ is dispersable.
		
		\textbf{Case 2:} $\Delta(G) \geq 3$. Since $G$ is an outerplanar graph, all its vertices can be placed on a printing cycle such that all edges lie strictly inside the cycle, and without any crossings. 
		According to Lemma 2.3, $\Delta(G) = \chi^{\prime}(G)$ for $G$ is a class 1 graph. 
		Consequently, all outerplanar graphs with maximum degree at least 3 are dispersable.
	\end{proof}
	
	
	\begin{cor}
		$F \in \{S, Q, R, T\}$, let \( P_n \) be a path with \( n \) vertices, then \( F(P_n) \) is dispersable.
	\end{cor}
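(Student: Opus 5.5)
The plan is to reduce Corollary 3.1 to Theorem 3.1. For each $F\in\{S,Q,R,T\}$ we check two things: that $F(P_n)$ is outerplanar, and that it is not an odd cycle. Theorem 3.1 then gives $mbt(F(P_n))=\Delta(F(P_n))$ directly.

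\textbf{Outerplanarity.} Write $P_n=v_1v_2\cdots v_n$ and let $e_i$ denote both the edge $v_iv_{i+1}$ and its white vertex.

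$S(P_n)$ is the path $P_{2n-1}$, so it is outerplanar and has no cycle at all.

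$R(P_n)$ is the path $v_1\cdots v_n$ with a triangle $v_iv_{i+1}e_i$ attached along each edge. The triangles share only cut-vertices $v_i$, so the graph is a chain of triangles. It is outerplanar with outer cycle $v_1e_1v_2e_2\cdots e_{n-1}v_n$, read back along the spine edges.

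$Q(P_n)$ is the path $v_1e_1e_2\cdots e_{n-1}v_n$ together with the edges $v_ie_{i-1}$ and $v_ie_i$ for $2\le i\le n-1$. Each interior $v_i$ is then adjacent to the two consecutive vertices $e_{i-1}$ and $e_i$, which are themselves adjacent. So $Q(P_n)$ is a path with pendant triangles attached on one side, which is clearly outerplanar.

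$T(P_n)$ is the most delicate case. Its vertices, taken in the order $v_1,e_1,v_2,e_2,\dots,v_n$, induce the path $v_1e_1v_2\cdots v_n$. The other edges are the chords $v_iv_{i+1}$ and $e_{i-1}e_i$. Together these edges form the square of the path $P_{2n-1}$ on that order. Since $P_m^2$ is outerplanar, with Hamiltonian cycle $1,3,5,\dots,\dots,6,4,2$ and all remaining edges as non-crossing chords, so is $T(P_n)$.

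Any of these outerplanar drawings serves as the printing cycle in Case 2 of Theorem 3.1.

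\textbf{Not an odd cycle.} Here we dispose of the small cases by hand.

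$S(P_n)$ is a path, so it is never a cycle.

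$R(P_2)$, $Q(P_2)$ and $T(P_2)$ are all triangles $C_3$, which is an odd cycle. For $F\in\{R,Q,T\}$ we must therefore assume $n\ge 3$ (or $n\ne 2$), and note explicitly that this exceptional case fails.

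$R(P_1)$, $Q(P_1)$ and $T(P_1)$ are single vertices and are trivially dispersable.

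For $n\ge3$, each of $R(P_n)$, $Q(P_n)$ and $T(P_n)$ has a vertex of degree at least 3, so none of them is a cycle. Theorem 3.1 then applies.

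\textbf{Main obstacle.} The step needing most care is $T(P_n)$: we must exhibit its outerplanar embedding explicitly, which we do via the $P_{2n-1}^2$ description. A second point is that the statement as written fails for $n=2$ with $F\in\{R,Q,T\}$, since the triangle $C_3$ is not dispersable. We would state the corollary for $n\ge3$, or for $n\neq2$, and flag this exception.
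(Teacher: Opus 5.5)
Your proposal takes the same route as the paper: show that $F(P_n)$ is outerplanar and not an odd cycle, then apply Theorem~3.1. The paper only asserts these two facts. Your explicit layouts, notably $T(P_n)\cong P_{2n-1}^2$ on the order $v_1,e_1,v_2,\dots,v_n$, supply the verification the paper leaves to Fig.~3.

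You are also right that the ``not an odd cycle'' hypothesis fails for $R(P_2)=T(P_2)=C_3$, so the corollary as stated is false there. The paper itself concedes this in the proof of Corollary~3.4, where it says that among $S,R,T$ only $S(P_2)$ is dispersable.

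One correction: $Q(P_2)$ is not a triangle. With a single new vertex there is no pair of adjacent edges to join, so $Q(P_2)=v_1e_1v_2\cong P_3=S(P_2)$. Your own description of $Q(P_n)$ gives exactly this at $n=2$, and the paper relies on it when it identifies $P_2+_S H$ with $S_1+_Q H$. Hence $Q(P_2)$ is dispersable, and the exception should be stated only for $F\in\{R,T\}$ with $n=2$.
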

	
	\begin{proof}
		Let $P_n$ be a path. 
		Since \( F(P_n) \) is an outerplanar graph which is not an odd cycle, by Theorem 3.1, \( F(P_n) \) is dispersable(see Fig.~3).
	\end{proof}

	\begin{figure}[htbp]
		\centering
		\includegraphics[height=1.7cm, width=0.6\textwidth]{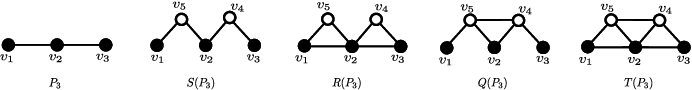}
		\centerline{Fig.~3 $P_3$ and $F(P_3)$}
	\end{figure}
	
	\begin{cor}
		$F \in \{S, Q, R, T\}$, let \( C_n \) be a cycle with \( n \) vertices, then \( S(C_n), Q(C_n),R(C_n) \) is dispersable, and $T(C_n)$ is nearly dispersable.
	\end{cor}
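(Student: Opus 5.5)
The first three claims will follow from Theorem 3.1, and the fourth from Lemma 2.4.

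\emph{$S(C_n)$, $Q(C_n)$, $R(C_n)$.} We have $S(C_n)=C_{2n}$, an even cycle, so it is dispersable by Case 1 of Theorem 3.1.

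For $R(C_n)$, place $v_1,e_1,v_2,e_2,\dots,v_n,e_n$ (with $e_i=v_iv_{i+1}$) consecutively on the printing cycle. The boundary edges are $v_ie_i$ and $e_iv_{i+1}$, and the only chords are the $v_iv_{i+1}$. These chords are pairwise non-crossing, since each spans only the single white vertex $e_i$. So $R(C_n)$ is outerplanar with $\Delta=4\ge 3$, and it is not an odd cycle. Theorem 3.1 (via Lemma 2.3) then gives dispersability.

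For $Q(C_n)$, the white vertices form a cycle $e_1e_2\cdots e_n$, and each black vertex $v_{i+1}$ is attached to $e_i$ and $e_{i+1}$, forming a triangle on the edge $e_ie_{i+1}$. Use the order $e_1,v_2,e_2,v_3,\dots,e_n,v_1$. Every edge $v_{i+1}e_i$, $v_{i+1}e_{i+1}$ is then a boundary edge, and every $e_ie_{i+1}$ is a chord skipping one vertex. So $Q(C_n)$ is outerplanar with $\Delta=4$, and Theorem 3.1 applies. I will check $n=3$ separately; the same order still works there.

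\emph{$T(C_n)$.} Every vertex of $T(C_n)$ has degree $4$: $d(v)=2\cdot 2$, and $d(e)=2+2=4$ by Lemma 3.4. I will identify $T(C_n)$ with the circulant $C(\mathbb{Z}_{2n},\{1,2\})$ under the cyclic order $v_1,e_1,v_2,e_2,\dots$. Consecutive positions give the incidences $v_ie_i$ and $e_iv_{i+1}$. Distance-$2$ positions give the adjacencies $v_iv_{i+1}$ and $e_ie_{i+1}$.

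For the lower bound, the graph contains triangles $v_ie_iv_{i+1}$, so it is not bipartite. Since it is $4$-regular, Lemma 2.2 shows it is not dispersable, and therefore $mbt\ge 5$ by Lemma 2.1.

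For the upper bound, Lemma 2.4 applies directly: here $2n$ is even and $k=2$ is even, so $mbt(T(C_n))=\Delta+1=5$. This is the cleanest route, and it also supplies the lower bound.

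The main obstacle is confirming that the isomorphism with $C(\mathbb{Z}_{2n},\{1,2\})$ is exact, including small $n$ such as $n=3$, where $T(C_3)=C(\mathbb{Z}_6,\{1,2\})$. The case analysis above verifies this. If Lemma 2.4's hypotheses were read as requiring $k\ge 4$, I would instead build an explicit $5$-page matching embedding along the natural order. I would put the cycle edges in two pages, handle the distance-$2$ chords by parity on two further pages, and use a fifth page for the single wrap-around conflict.
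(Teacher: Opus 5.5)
Your proof is correct and follows the paper's route. You use outerplanarity plus Theorem~3.1 for $S(C_n)$, $R(C_n)$, $Q(C_n)$, and the isomorphism $T(C_n)\cong C(\mathbb{Z}_{2n},\{1,2\})$ followed by Lemma~2.4 for $T(C_n)$; your explicit outerplanar orders and the isomorphism check fill in what the paper only asserts.

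One caution: the fallback you sketch would not work as described. In the natural cyclic order the chords $\{i,i+2\}$, $\{i+1,i+3\}$, $\{i+2,i+4\}$ pairwise conflict (two crossings and a shared vertex), so the distance-$2$ chords need at least three pages by themselves and cannot be split by parity onto two. The fallback is not needed, however, since Lemma~2.4 as stated covers $k=2$.
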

	
	\begin{proof}
		It is straightforward to see that $S(C_n)$, $R(C_n)$, and $Q(C_n)$ are all outerplanar graphs which are not odd cycles. By Theorem~3.1, they are dispersable.
		
		When applying the $T$-operation to $C_n$, we place the white vertex corresponding to each edge of $C_n$ outside the cycle, so that the black and white vertices alternate around a new cycle of length $2n$, denoted $C_{2n}$. All remaining edges are then drawn inside this $C_{2n}$. Consequently, $T(C_n)$ is isomorphic to the circulant graph $C(\mathbb{Z}_{2n}, \{1,2\})$. By Lemma~2.4, it follows that $T(C_n)$ is nearly dispersable (see Fig.~4).
	\end{proof}

	\begin{figure}[htbp]
		\centering
		\includegraphics[height=3.1cm, width=0.6\textwidth]{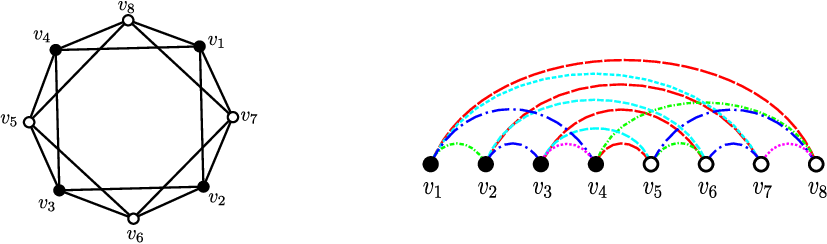}
		\centerline{Fig.~4 $T(C_4)$ and The matching book embedding of $T(C_4)$}
	\end{figure}

	
	\begin{cor}
		$F \in \{S, Q, R, T\}$, Let \( S_n \) is a star graph with $n+1$ vertices\( (n \geq 1) \) , then \( F(S_n) \) is dispersable.
	\end{cor}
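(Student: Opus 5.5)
The plan is to follow the pattern of the two preceding corollaries: show that each $F(S_n)$, $F\in\{S,R,Q,T\}$, is an outerplanar graph that is not an odd cycle, and then invoke Theorem~3.1. Throughout, let $c$ be the centre of $S_n$, let $v_1,\dots,v_n$ be the leaves, and let $e_i$ be the white vertex corresponding to the edge $cv_i$.

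\textbf{$S(S_n)$ and $R(S_n)$.} $S(S_n)$ is a spider (a subdivided star), hence a tree. It is therefore outerplanar, and it is not a cycle at all. $R(S_n)$ consists of $n$ triangles $c\,e_i\,v_i$ sharing the single vertex $c$, i.e.\ a friendship-type graph. To see it is outerplanar, place the vertices around a circle in the order
\[
c,\ e_1,\ v_1,\ e_2,\ v_2,\ \dots,\ e_n,\ v_n .
\]
Each triangle then uses the chord $c e_i$ or $c v_i$ together with the consecutive pair $e_i v_i$, and chords from $c$ never cross. For $n\ge 2$ it is not a cycle. For $n=1$ it is the triangle $C_3$, which is an odd cycle, so this boundary case has to be treated separately.

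\textbf{$Q(S_n)$.} This is the clique $K_n$ on $e_1,\dots,e_n$, with a pendant $c$ attached to $e_1$ and a pendant $v_i$ attached to each $e_i$. For $n\ge 4$ it contains $K_4$, so it is not outerplanar and Theorem~3.1 cannot be used. Instead I would argue directly from the characterization in the Introduction: find a printing cycle together with a proper $\Delta$-edge colouring in which each colour class is a non-crossing set of chords. Here $\Delta(Q(S_n))=n+1$.

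\begin{itemize}
\item \emph{Colouring.} Colour the clique $K_n$ with $n$ colours when $n$ is odd (or $n-1$ colours when $n$ is even), and use the remaining colour(s) for the pendant edges.
\item \emph{Printing cycle.} Place the $e_i$ in convex position, as for the standard dispersable-type layouts of complete graphs, and insert each pendant vertex $v_i$ immediately next to $e_i$. A pendant edge joining consecutive spine vertices then crosses nothing.
\end{itemize}

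However, Lemma~2.2 shows that complete graphs are generally not dispersable. For large $n$ the clique part will likely need more pages than a direct count suggests, which casts doubt on whether the statement holds literally for $Q$.

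\textbf{$T(S_n)$.} Here $T(S_n)$ is $R(S_n)$ plus a clique on $\{e_i\}$. Its maximum degree is $2n$, attained at $c$ by Lemma~3.4, and $c$ is adjacent to every other vertex. For $n\ge 3$ it again contains $K_4$, so it is not outerplanar. The plan is to order the vertices $c, e_1, v_1, \dots$ as above and extend a matching-page assignment of the clique on the $e_i$ by the pages used by edges at $c$. The main task is to verify that $2n$ pages suffice. Each page can contain at most one edge at $c$, so all $2n$ pages are needed at $c$, and each clique edge must share a page with some edge at $c$ without crossing it.

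\textbf{Main obstacle.} The crux is the $Q$ and $T$ cases. In both, the outerplanarity shortcut via Theorem~3.1 fails once $n\ge 3$ or $4$, so an explicit embedding is required. I would first try small cases ($n=2,3,4$) by hand to find a pattern or a counterexample. I would also state the restriction to $n\ge 2$ explicitly, since $R(S_1)$ and $T(S_1)$ are triangles.
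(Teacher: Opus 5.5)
Your treatment of $S(S_n)$ and $R(S_n)$ follows the paper's route: outerplanar, not an odd cycle, then Theorem~3.1. Your caveat is also correct. $R(S_1)=T(S_1)=C_3$ is not dispersable, so for $R$ and $T$ the statement needs $n\ge 2$; the paper itself admits in the proof of Corollary~3.4 that only $S(P_2)$ is dispersable. The genuine gap is that for $Q$ and $T$ you never produce an embedding, and your doubts come from a misreading of $Q(S_n)$. Subdividing $cv_i$ leaves the edge $ce_i$, so $c$ is adjacent to \emph{every} $e_i$. Hence $\{c,e_1,\dots,e_n\}$ induces $K_{n+1}$, not $K_n$ with $c$ pendant at $e_1$; $c$ has degree $n$ and each $e_i$ has degree $n+1$. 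For $n\ge 2$, Lemma~2.2 only yields $mbt(K_{n+1})\ge n+1$. That is exactly $\Delta(Q(S_n))$, so there is no obstruction: the extra page the clique forces is filled by the pendant edges.

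The missing construction, which is the paper's, uses exactly your circular order $c,e_1,v_1,e_2,v_2,\dots,e_n,v_n$ as the spine. Index $c$ by $0$ and $e_i$ by $i$ in $\mathbb{Z}_{n+1}$. Let page $r$ contain the clique edges whose indices sum to $r \pmod{n+1}$. These are parallel chords of the convex $(n+1)$-gon, hence a non-crossing matching, and the clique vertex of index $k$ is missed by exactly one class, namely $r\equiv 2k$. Put $e_kv_k$ on page $2k \bmod (n+1)$; since $v_k$ immediately follows $e_k$ on the spine, this edge crosses nothing. This gives an $(n+1)$-page matching book embedding of $Q(S_n)$.

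Now $T(S_n)=Q(S_n)+\{cv_1,\dots,cv_n\}$. For $n\ge 2$, $c$ is free on page $0$. The last spine vertex $v_n$ is used only on page $2n\equiv n-1\not\equiv 0$. So $cv_n$, which joins the first and last spine vertices and crosses nothing, can go on page $0$. Each of the remaining $n-1$ edges $cv_k$ gets its own new page, giving $2n=\Delta(T(S_n))$ pages in total.
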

	
	\begin{proof}
		The star $S_n$ is a complete bipartite graph $K[1,n]$. Since $S(S_n)$ and $R(S_n)$ are outerplanar graphs which are not odd cycles, Theorem~3.1 implies that they are dispersable. Next, we present a matching book embedding of $Q(S_n)$ and $T(S_n)$.
		
		\noindent \textbf{Case 1: } $Q(S_n)$. 
		
		By Lemma~3.3, we have \( \Delta(Q(S_n)) = \Delta(S_n) + 1 =  n + 1 \).
		By Lemma~2.1, it follows that \( mbt(Q(S_n)) \geq n + 1 \).
		Let $v_0$ denote the central vertex of $Q(S_n)$. Label the vertices in $V(Q(S_n)) - V(S_n)$, which correspond to the white vertices, in counterclockwise order as $v_1, v_2, \ldots, v_n$. Then, starting from the vertex adjacent to $v_n$, label the remaining vertices of $Q(S_n)$ in clockwise order as $v_{n+1}, v_{n+2}, \ldots, v_{2n}$. The ordering along the spine is defined as $v_0, v_1, v_{2n}, v_2, v_{2n-1}, v_3, v_{2n-2}, \ldots,  v_{n-1}, v_{n+2}, v_n, v_{n+1}$.

		\text{Page 1: the edge } $\{v_iv_j \mid i + j = \text{1 } (\text{mod } n + 1) \text{ and } 0 \leq i, j \leq n\}$ and $\{(v_kv_{2n-k+1}) \mid 2k = \text{1 } (\text{mod } n + 1) \text{ and } 1 \leq k \leq n\}$.
		
		\text{Page 2: the edge } $\{v_iv_j \mid i + j = \text{2 } (\text{mod } n + 1) \text{ and } 0 \leq i, j \leq n\}$ and $\{(v_kv_{2n-k+1}) \mid 2k = \text{2 } (\text{mod } n + 1) \text{ and } 1 \leq k \leq n\}$.
		
		$\ldots$ $\ldots$
		
		\text{Page $n$: the edge } $\{v_iv_j \mid i + j = \text{$n$ }  (\text{mod } n + 1) \text{ and } 0 \leq i, j \leq n\}$ and $\{(v_kv_{2n-k+1}) \mid 2k = \text{$n$ }  (\text{mod } n + 1) \text{ and } 1 \leq k \leq n\}$.
		
		\text{Page $n+1$: the edge } $\{v_iv_j \mid i + j = \text{0 } (\text{mod } n + 1) \text{ and } 0 \leq i, j \leq n \}$ and  $\{(v_kv_{2n-k+1}) \mid 2k = \text{0 } (\text{mod } n + 1) \text{ and } 1 \leq k \leq n\}$.
		
		All edges on each page are crossing-free. Consequently, \( Q(S_n) \) admits a matching book embedding in \( n + 1 \) pages. Thus, $Q(S_n)$ is dispersable.(see Fig.~5)
		
		\begin{figure}[htbp]
			\centering
			\includegraphics[height=2.6cm, width=0.63\textwidth]{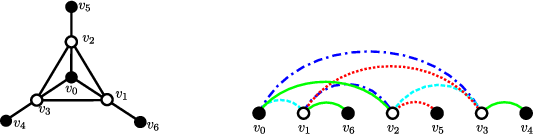}
			\centerline{Fig.~5 $Q(S_3)$ and The matching book embedding of $Q(S_3)$}
		\end{figure}
		
		\noindent \textbf{Case 2: } $T(S_n)$. 
		
		By Lemma~3.3, we have \( \Delta(T(S_n)) = 2n \). 
		By Lemma~2.1, it follows that \( mbt(T(S_n)) \geq 2n \). 
		Since $Q(S_n)$ is a subgraph of $T(S_n)$ and $V(T(S_n)) = V(Q(S_n))$, we adopt the vertex labeling of $Q(S_n)$ from Case~1 for $T(S_n)$. The ordering along the spine is defined as $v_0, v_1, v_{2n}, v_2, v_{2n-1}, \ldots, v_n, $ $v_{n+1}$.

		\text{Page 1: the edge } $\{v_iv_j \mid i + j = \text{1 } (\text{mod } n + 1) \text{ and } 0 \leq i, j \leq n\}$ and $\{(v_kv_{2n-k+1}) \mid 2k = \text{1 } (\text{mod } n + 1) \text{ and } 1 \leq k \leq n\}$.
		
		\text{Page 2: the edge } $\{v_iv_j \mid i + j = \text{2 } (\text{mod } n + 1) \text{ and } 0 \leq i, j \leq n\}$ and $\{(v_kv_{2n-k+1}) \mid 2k = \text{2 } (\text{mod } n + 1) \text{ and } 1 \leq k \leq n\}$.
		
		$\ldots$
		
		\text{Page $n$: the edge } $\{v_iv_j \mid i + j = \text{$n$ } (\text{mod } n + 1) \text{ and } 0 \leq i, j \leq n\}$ and $\{(v_kv_{2n-k+1}) \mid 2k = \text{$n$ }  (\text{mod } n + 1) \text{ and } 1 \leq k \leq n\}$.
		
		\text{Page $n+1$: the edge } $\{v_iv_j \mid i + j = \text{0 } (\text{mod } n + 1) \text{ and } 0 \leq i, j \leq n + 1\}$ and $\{(v_kv_{2n-k+1}) \mid 2k = \text{0 } (\text{mod } n + 1) \text{ and } 1 \leq k \leq n\}$.
		
		\text{Page $n + 2$: the edge } $v_0v_{n+2}$.
		
		$\ldots$ $\ldots$
		
		\text{Page $2n$: the edge } $v_0v_{2n}$.

		This confirms \( T(S_n) \) is dispersable.(see Fig.~6)	
	\end{proof}
	
	\begin{figure}[htbp]
		\centering
		\includegraphics[height=2.6cm, width=0.61\textwidth]{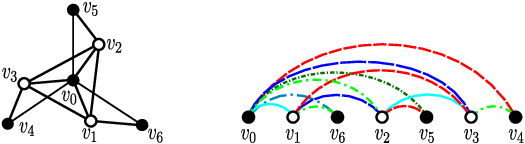}
		\centerline{Fig.~6 $T(S_3)$ and The matching book embedding of $T(S_3)$}
	\end{figure}

	In \cite{8}, Liu et al. proved that for any simple graph $ G $ and any dispersable bipartite graph $ H $, the inequality $ mbt(G \Box H) \leq mbt(G) + \Delta(H) $ holds, where $ \Delta(H) $ denotes the maximum vertex degree of $ H $. In this section, we use a similar approach to prove the following theorem.

	\begin{thm}
		$F \in \{S, Q, R, T\}$, let \( G \) be any simple graph and let \( H \) be any dispersable bipartite graph, then \( mbt(G {+_F} H) \leq mbt(F(G)) + \Delta(H)\), where $\Delta(H)$ is the maximum vertex degree in $H$.
	\end{thm}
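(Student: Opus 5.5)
We adapt the product construction of Liu et al.\ for $mbt(G\Box H)\le mbt(G)+\Delta(H)$. The approach is to lay the copies of $F(G)$ along the spine in two orientations dictated by the bipartition of $H$ and its dispersable embedding. The pages of $F(G)$ are reused for all copies, and $\Delta(H)$ new pages carry the edges between copies. These are exactly the edges joining black vertices with the same name.

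\textbf{Step 1: fix the two embeddings.} Let $k=mbt(F(G))$ and fix a $k$-page matching book embedding of $F(G)$ with spine order $x_1,x_2,\dots,x_m$. Since $H$ is dispersable and bipartite with parts $X,Y$, fix a matching book embedding of $H$ with spine order $h_1,\dots,h_t$ and $\Delta(H)$ pages. Each page is a non-crossing matching.

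\textbf{Step 2: build the spine of $G+_F H$.} Let $F(G)_{h}$ denote the copy attached to $h\in V(H)$. Place the copies as consecutive blocks in the order $h_1,\dots,h_t$. Inside the block of $h$, use the order $x_1,\dots,x_m$ if $h\in X$, and the reversed order $x_m,\dots,x_1$ if $h\in Y$.

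\textbf{Step 3: assign the edges.} Each edge inside a copy $F(G)_h$ goes on the same page as in the embedding of $F(G)$. Reversing a block does not create crossings. Different blocks occupy disjoint intervals of the spine, so their edges on a common page neither cross nor share vertices. Hence the first $k$ pages are matching pages.

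For an edge $h_ih_j$ of $H$ lying on page $p$ of $H$'s embedding, place all edges $(x,h_i)(x,h_j)$, for black $x$, on the new page $k+p$. This is legitimate because $E^*$ removes only the white ones.

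\textbf{Step 4: verify the new pages (main obstacle).} We must show that each new page is a non-crossing matching. The matching property is inherited: on page $k+p$, each vertex $(x,h)$ meets at most one edge, since $h$ has at most one incident edge on page $p$.

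For non-crossing, first consider edges coming from the same $H$-edge $h_ih_j$. Since $H$ is bipartite, one endpoint lies in $X$ and the other in $Y$, so the two blocks have opposite orientations. The edges joining $(x,h_i)$ to $(x,h_j)$ therefore form a nested family: the innermost pair is adjacent across the boundary, and the pairs fan outward. Nested edges do not cross, and dropping the white vertices keeps the family nested.

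Next consider two distinct $H$-edges on page $p$. They do not cross in $H$'s embedding, so they are either nested or disjoint as intervals of blocks. Consequently all edges generated from them are likewise nested or disjoint.

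This verification is the delicate point, and it is where bipartiteness is essential. Concretely, one checks that the interval of an edge from $h_ih_j$ with $i<j$ is contained in the union of blocks $i$ through $j$, and that it covers the full blocks strictly between $i$ and $j$. Comparing two edge intervals via the positions of $h$-blocks then reduces to the nested-or-disjoint relation in $H$.

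The construction uses $k+\Delta(H)$ pages in total, which gives $mbt(G+_F H)\le mbt(F(G))+\Delta(H)$.
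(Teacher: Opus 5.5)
Your proof is correct and follows the paper's construction exactly: copies of the $F(G)$ embedding are reversed on one colour class of $H$, and the black-vertex edges coming from each page of $H$ go on one new page. Your Step~4 supplies the nested-or-disjoint verification that the paper leaves implicit; the only wording fix is that the blocks of $h_i$ and $h_j$ need not be consecutive on the spine, so the ``innermost pair'' is not literally adjacent, though the nesting argument goes through unchanged.
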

	
	\begin{proof}

		Suppose that $ H $ is a dispersable bipartite graph. Since $ H $ is dispersable, there exists a $ \Delta(H) $-edge coloring of $ H $, which allows for a corresponding matching book embedding of $ H $ in a $ \Delta(H) $-page book such that all edges of each color class lie on the same page. Since $ H $ is bipartite, its vertices admit a 2-coloring using blue and red.
		
		Now we embed $ G +_F H $ in the following way. 
		Take a matching book embedding of $ F(G) $ using exactly $ mbt(F(G)) $ pages. 
		Using a dispersable book embedding of $ H $, replace each red vertex of $ H $ with a copy of this matching book embedding of $ F(G) $, and each blue vertex of $ H $ with a reversed copy (i.e., with the vertex ordering reversed). 
		Now we have a copy of $ F(G) $ for each vertex of $ H $. Since each of these copies is placed separately along the spine, the edges of $ G +_F H $ corresponding to the copies of $ F(G) $ can all be placed within $ mbt(F(G)) $ pages.
		
		The rest of the edges of $ G +_F H $ connect corresponding vertices in adjacent (with respect to $ H $) copies of $ F(G) $. 
		Since $ H $ is bipartite, the edges of $ H $ join vertices of different colors. 
		Because the vertex ordering in $ F(G) $ is reversed for the two color classes in $ H $, the set of edges between two adjacent copies of $ F(G) $ corresponding to a single edge of $ H $ can all be drawn on a single page without crossings.
		
		Since the edges of $H$ can be placed on $\Delta(H)$ pages in a matching book embedding, the sets of copies of the edges of $H$ can also be placed on $\Delta(H)$ pages. Hence, all edges of $G{+_F}H$ can be drawn on $mbt(F(G)) + \Delta(H)$ pages.
	\end{proof}
	
	
	For example, let $G = P_3$, $H = C_4$, the matching book embedding of $T(G)$ and $H$ are shown in Fig.~7. 
	The resulting matching book embedding of graph \( G {+_T} H \) using 6 pages is shown in Fig.~8.

	\begin{figure}[htbp]
		\centering
		\includegraphics[height=1.5cm, width=0.5\textwidth]{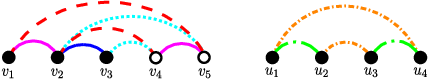}
		\centerline{Fig.~7 The matching book embedding of \( T(P_3) \) (left) and \( C_4 \) (right).}
	\end{figure}
	
	\vspace{1cm}
	
	\begin{figure}[htbp]
		\centering
		\includegraphics[height=4.3cm, width=1\textwidth]{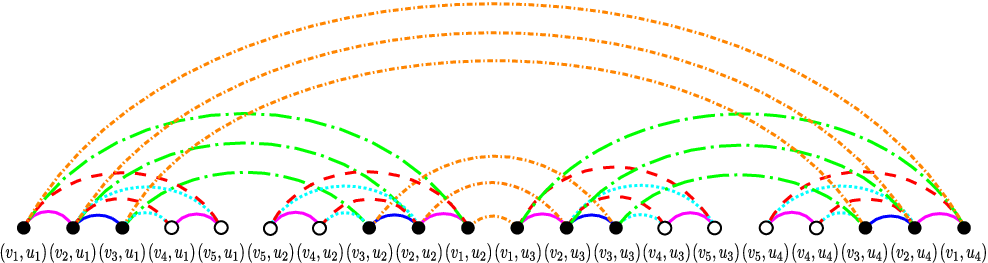}
		\centerline{Fig.~8 The matching book embedding of $P_3 {+_T} C_4$}
	\end{figure}

	\begin{cor}
		$F \in \{S, R, T\}$, Let \( G \) be any simple graph and let \( H \) be dispersable bipartite graph. 
		If \( F(G) \) is dispersable, then \( G +_F H \) is dispersable. where $\Delta(H)$ is the maximum vertex degree in $H$.
	\end{cor}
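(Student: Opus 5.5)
Combine the upper bound of Theorem~3.2 with the lower bound of Lemma~2.1. Theorem~3.2 together with the hypothesis $mbt(F(G))=\Delta(F(G))$ gives
\[
mbt(G+_F H)\le \Delta(F(G))+\Delta(H).
\]
By Lemma~2.1, $mbt(G+_F H)\ge \Delta(G+_F H)$. So it suffices to prove $\Delta(G+_F H)\ge \Delta(F(G))+\Delta(H)$; equality then forces $mbt(G+_F H)=\Delta(G+_F H)$, which is dispersability.

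For the degree computation I use the structure of $G+_F H$ from Definition~2.2. A black vertex $(u,v)$ with $u\in V(G)$ has degree $d_{F(G)}(u)+d_H(v)$. A white vertex $(e,v)$ has degree $d_{F(G)}(e)$ only, since the edges in $E^*$ were removed. Hence
\[
\Delta(G+_F H)=\max\Bigl\{\max_{u\text{ black}} d_{F(G)}(u)+\Delta(H),\ \max_{e\text{ white}} d_{F(G)}(e)\Bigr\}.
\]
The crucial point is to choose a black vertex $u$ of maximum degree in $F(G)$ and pair it with a vertex $v$ of maximum degree in $H$. Lemmas~3.1, 3.2 and 3.4 supply such a $u$ for $F=S$ (when $|V(G)|\ge 3$), $F=R$ and $F=T$. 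Then $(u,v)$ has degree $\Delta(F(G))+\Delta(H)$, which gives the required lower bound. Combined with the upper bound, this shows $G+_F H$ is dispersable.

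The main obstacle is ensuring a black vertex of maximum degree exists. This is precisely why $Q$ is excluded: by Lemma~3.3 all maximum-degree vertices of $Q(G)$ are white, and the bound would fail. For $F=S$ with $|V(G)|\le 2$, I would treat the case separately. Then $G$ is $K_1$ or $K_2$. If $G=K_2$, then $S(G)=P_3$, and in $G+_S H$ each black copy-vertex has degree $1+d_H(v)$. Thus the maximum degree is $1+\Delta(H)=\Delta(P_3)+\Delta(H)$ provided $\Delta(H)\ge 1$. If $G=K_1$, the graph is just $H$, which is dispersable by hypothesis. The degenerate case where $H$ has no edges is immediate, since then $G+_F H$ is a disjoint union of copies of $F(G)$.
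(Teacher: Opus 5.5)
Your main argument is the paper's Case~2. A black maximum-degree vertex of $F(G)$ (Lemmas~3.1, 3.2, 3.4) gives $\Delta(G+_F H)=\Delta(F(G))+\Delta(H)$, and this meets the upper bound of Theorem~3.2 once $mbt(F(G))=\Delta(F(G))$.

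The gap is your treatment of $F=S$, $G=K_2$. You write $1+\Delta(H)=\Delta(P_3)+\Delta(H)$, but $\Delta(P_3)=2$. The maximum-degree vertex of $S(K_2)=P_3$ is the white subdivision vertex, which is exactly why Lemma~3.1 requires $|V(G)|\ge 3$. So for $\Delta(H)\ge 1$ you correctly have $\Delta(K_2+_S H)=1+\Delta(H)$. However, Theorem~3.2 only gives $mbt(K_2+_S H)\le mbt(P_3)+\Delta(H)=2+\Delta(H)$. The two bounds differ by one page, so your sandwich argument cannot prove dispersability here. This case is not vacuous, since $S(K_2)=P_3$ is dispersable and the hypothesis holds.

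What is missing is an explicit matching book embedding of $K_2+_S H$ in $1+\Delta(H)$ pages. The paper gets it by observing $K_2+_S H\cong S_1+_Q H$, because $S(K_2)=Q(K_2)=P_3$ and only black vertices receive $H$-edges, and then applying Theorem~3.3.

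You can also construct it directly:
\begin{enumerate}
\item Write each copy of $P_3$ as $a\,w\,b$ with $w$ white. Order it $a,w,b$ at red vertices of $H$ and $b,w,a$ at blue ones.
\item Fix one page $E$ of the dispersable embedding of $H$.
\item Put all edges $aw$ together with the $b$-copies of $E$ on one page.
\item Put all edges $wb$ together with the $a$-copies of $E$ on a second page.
\item Place the copies of the remaining $\Delta(H)-1$ pages of $H$ as in Theorem~3.2.
\end{enumerate}
The edges $aw$ and $wb$ join spine-consecutive vertices, so they cross nothing. Every vertex has degree at most one on each page. This uses $1+\Delta(H)$ pages, which closes the case.
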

	
	\begin{proof}
		\textbf{Case 1:} \( |V(G)| = 2 \).  
		For $F \in \{S, R, T\}$, only $S(P_2)$ is dispersable.
		In the results that follow, we prove that $S_1 +_Q H$ is dispersable.
		Since \( P_2 +_S H \) is isomorphic to \( S_1 +_Q H \), it follows that \( P_2 +_S H \) is also dispersable.
		
		\textbf{Case 2:} Let \( F \in \{S, R, T\} \). By Lemmas~3.1, 3.2, and 3.4, \( F(G) \) always contains a vertex of maximum degree that is black. 
		According to the definition of the \( F \)-sum, we have
		\(
		\Delta(G +_F H) = \Delta(F(G)) + \Delta(H).
		\)
		By Lemma~2.1, it follows that
		\(
		mbt(G +_F H) \geq \Delta(F(G)) + \Delta(H).
		\)
		On the other hand, Theorem~3.2 implies
		\(
		mbt(G +_F H) \leq mbt(F(G)) + \Delta(H).
		\)
		Since \( F(G) \) is dispersable, i.e., \( mbt(F(G)) = \Delta(F(G)) \), we conclude that
		\(
		mbt(G +_F H) = \Delta(G +_F H).
		\)
		Hence \( G +_F H \) is dispersable.
	\end{proof}
	
	\begin{rem}
		When $F \in \{S, R, T\}$, Corollary~3.4 shows that there exist graphs $G$ and $H$ such that
		$mbt(G +_F H) = mbt(F(G)) + \Delta(H).$
		For example,
		$mbt(P_3 +_F C_4) = mbt(F(P_3)) + \Delta(C_4).$ Hence, for $F \in \{S, R, T\}$, the upper bound of $mbt(G +_F H)$  given in Theorem~3.2 is tight,
	\end{rem}

	\begin{rem}
		For $G +_Q H$, we have
		$\Delta(G +_Q H) = \max\left\{ \Delta(Q(G)),\, \Delta(G) + \Delta(H) \right\},$
		and
		$\Delta(G +_Q H) < mbt(Q(G)) + \Delta(H).$ In fact, for \( G +_Q H \), Lemma~3.3 implies that every vertex of maximum degree in \( Q(G) \) is white. 
		By the definition of the \( Q \)-sum, we have \( \Delta(G +_{Q} H) = max \{\Delta(Q(G)), \Delta(G) + \Delta(H)\} \). 
		
		When \( \Delta(G) + \Delta(H) \geq \Delta(Q(G))\), we have
		$\Delta(G +_Q H) = \Delta(G) + \Delta(H) < \Delta(Q(G)) + \Delta(H) \leq mbt(Q(G)) + \Delta(H).$
		In particular, if \( G \) is a star graph $S_n$,
		$\Delta(S_n +_Q H) + 1 \leq mbt(Q(S_n)) + \Delta(H)$.
		Otherwise, $\Delta(G +_Q H) + 2 \leq mbt(Q(G)) + \Delta(H).$ 
		
		When \( \Delta(Q(G)) \geq \Delta(G) + \Delta(H) \), we have
		$\Delta(G +_Q H) = \Delta(Q(G)) < \Delta(Q(G)) + \Delta(H) \leq mbt(Q(G)) + \Delta(H).$
		
		The above inequality suggests that it is rather difficult to find suitable graphs $G$ and $H$ satisfying
		$mbt(G +_Q H) = mbt(Q(H)) + \Delta(H).$
	\end{rem}

		The following result shows that
		$mbt(S_n +_Q H) = \Delta(S_n +_Q H) < mbt(Q(S_n)) + \Delta(H).$
	
	
	\begin{thm}       
		Let \( S_n \) be a star graph with $n+1$ vertices\( (n \geq 1) \) and let \( H \) be any dispersable bipartite graph, then \( S_n {+}_Q H \) is dispersable.
	\end{thm}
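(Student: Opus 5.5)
The plan is to refine the construction in the proof of Theorem~3.2 so that it saves exactly one page. First I determine the degrees. In $Q(S_n)$ the center $v_0$ (black) has degree $n$. Each white vertex $v_1,\dots,v_n$ has degree $n+1$: it is adjacent to $v_0$, to its leaf, and to the other $n-1$ white vertices. Each leaf (black) has degree $1$. In $S_n+_Q H$ only black vertices gain the $H$-degree, so
$$\Delta(S_n+_Q H)=\max\{n+1,\; n+\Delta(H),\; 1+\Delta(H)\}=n+\Delta(H)$$
whenever $\Delta(H)\ge 1$. By Lemma~2.1 it therefore suffices to exhibit a matching book embedding with $n+\Delta(H)$ pages. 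Theorem~3.2 together with Corollary~3.3 only gives $(n+1)+\Delta(H)$.

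The key observation is about the $(n+1)$-page embedding of $Q(S_n)$ from Case~1 of Corollary~3.3, where edge $v_iv_j$ receives colour $i+j \pmod{n+1}$ and a leaf edge $v_kv_{2n-k+1}$ receives colour $2k \pmod{n+1}$. The center $v_0$ never uses colour $0$, that is, page $n+1$. A leaf uses colour $0$ only when $2k\equiv 0 \pmod{n+1}$, which happens only for $n$ odd and $k=(n+1)/2$.

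I place the copies of $Q(S_n)$ along the dispersable embedding of $H$, normal for red vertices and reversed for blue vertices, exactly as in the proof of Theorem~3.2. I fix one colour class $M_1$ of the $\Delta(H)$-edge colouring of $H$. The $M_1$-copies joining black vertices then go onto page $n+1$ instead of a new page, and the other $\Delta(H)-1$ classes each get their own new page, as before. This uses $n+\Delta(H)$ pages in total.

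The matching condition on page $n+1$ holds at every black vertex that misses colour $0$. Non-crossing on page $n+1$ follows from an interval argument. Each copy occupies a contiguous block of the spine, and every colour-$0$ edge inside a copy avoids the endpoint $x$ of any inter-copy edge. Such an edge therefore lies entirely on one side of $x$ within its block, so it is either nested inside the inter-copy edge or disjoint from it. Inter-copy edges of $M_1$ are mutually non-crossing by the argument of Theorem~3.2. This settles the case $n$ even completely.

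The main obstacle is $n$ odd, where the special leaf $\ell$ attached to $w=v_{(n+1)/2}$ already uses colour $0$. The white vertex $w$ uses all $n+1$ colours, so the edge $w\ell$ cannot simply be recoloured. The first thing I would try is to keep $w\ell$ on page $n+1$ and move the $M_1$-edges at the copies of $\ell$ to one of the pages $p\in\{1,\dots,n\}$ that $\ell$ misses; such a page exists since $n\ge 3$. For these edges to avoid crossing the colour-$p$ edges inside each copy, I would modify the spine order of $Q(S_n)$ so that $\ell$ sits at an end of the block (say right after $v_{n+1}$, or at the front next to $v_0$). I would then check that the edge $w\ell$ and the remaining edges still form a valid $(n+1)$-page matching embedding, possibly after a cyclic relabelling of the white vertices. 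Once $\ell$ is extreme in its block, the interval argument above applies verbatim to page $p$.

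The degenerate case $n=1$ needs separate treatment. Here $Q(S_1)=P_3$ is a path with black ends and a white middle vertex, so $S_1+_Q H$ is two copies of $H$ joined by paths of length $2$, with $\Delta=1+\Delta(H)$. I would handle it directly by putting the two path edges of each copy on the two pages missed by the ends of the paths, using the reversed-copy placement.
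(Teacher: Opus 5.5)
\textbf{There is a genuine gap, in the step you consider settled.} Your interval argument goes from ``a colour-$0$ edge of a copy is not \emph{incident} to the black endpoint $x$ of an inter-copy edge'' to ``it lies entirely on one side of $x$''. That inference is invalid. An intra-copy edge crosses an inter-copy edge at $x$ exactly when it strictly encloses $x$ inside the block.

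In the spine order $v_0,v_1,v_{2n},v_2,v_{2n-1},\dots,v_n,v_{n+1}$ of Corollary~3.3, such enclosures occur:
\begin{itemize}
\item The colour-$0$ edge $v_1v_n$ (colour $0$ since $1+n\equiv 0$) strictly encloses the leaves $v_{2n},\dots,v_{n+2}$.
\item More generally, each colour-$0$ edge $v_iv_{n+1-i}$ encloses further leaves.
\end{itemize}
Take $n=2$ and $H=P_2$. The red block is $(v_0,v_1,v_4,v_2,v_3)$ at positions $1$--$5$ and the reversed blue block is at $6$--$10$. Your page $3$ contains $(v_1,u_1)(v_2,u_1)$ spanning $[2,4]$ and $(v_4,u_1)(v_4,u_2)$ spanning $[3,8]$, and these cross.

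So the construction fails for every $n\ge 2$, whatever the parity. The $n$ odd obstruction you singled out is only a matching-condition issue and is secondary. Your fixes for $n$ odd and for $n=1$ are also only sketched. For $n=1$, under the placement of Theorem~3.2 both black ends of a copy of $P_3$ at a vertex of maximum $H$-degree miss the same single page. The two path edges share the white middle vertex, so they need two different pages, and the sketch does not provide them.

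\textbf{The missing idea.} Do not force the chosen class $E$ of $H$ onto a single existing page; spread it over all $n+1$ pages, as the paper does.
\begin{itemize}
\item The $E$-copies at $v_0$ go on page $n+1$. This is colour $0$, which $v_0$ misses, and $v_0$ is extreme in its block, so no edge encloses it.
\item For $1\le c\le n$, the $E$-copies at the leaf $v_{2n+1-c}$ of $v_c$ go on page $c$.
\end{itemize}
This leaf uses only colour $2c\not\equiv c \pmod{n+1}$, so the matching condition holds. It sits immediately after $v_c$, so no colour-$c$ edge of its block encloses it. The colour-$c$ edges are of three kinds:
\begin{itemize}
\item $v_iv_j$ with $i+j=c$, which have both ends at or before $v_c$;
\item $v_iv_j$ with $i+j=c+n+1$, which have both ends after the leaf;
\item pendant edges, which join consecutive spine vertices.
\end{itemize}
The other $\Delta(H)-1$ classes then get their own pages as in Theorem~3.2. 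This gives $n+\Delta(H)$ pages uniformly for all $n\ge 1$, with no parity split and no special treatment of $n=1$.
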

	
	\begin{proof}
		
		Let the vertex ordering along the spine of a matching book embedding of \( H \) be denoted by \( u_1, u_2, \ldots, u_n \), 
		and let the vertices of \( H \) be colored red and blue.
		Let $E$ be the set of edges on an arbitrarily chosen page of this matching book embedding.
		
		By Lemma~3.3 and the definition of the \( Q \)-sum, we have \( \Delta(S_n {+}_Q H) = \Delta(S_n) + \Delta(H) = n + \Delta(H) \), Lemma 2.1 gives $ mbt(S_n {+}_Q H) \geq n + \Delta(H). $
		
		We first present a matching book embedding of $S_n +_Q H$ for the case $\Delta(H) \geq 2$. We label the vertices of $Q(S_n)$ using the same method as in Corollary~3.3. 
		We replace each red vertex \( u_i \) on the spine of \( H \) with the sequence
		$(v_0, u_i)$, $(v_1, u_i)$, $(v_{2n}, u_i)$,$(v_2, u_i)$, $(v_{2n-1}, u_i)$, $(v_3, u_i)$, $(v_{2n-2}, u_i)$, \ldots, $(v_{n-2}, u_i)$, $(v_{n+3}, u_i)$, $(v_{n-1}, u_i)$, $(v_{n+2}, u_i)$, $(v_n, u_i)$, $(v_{n+1}, u_i)$,
		and replace each blue vertex \( u_j \) on the spine of \( H \) with the sequence
		$(v_{n+1}, u_j)$, $(v_n, u_j)$, $(v_{n+2}, u_j)$, $(v_{n-1}, u_j)$, $(v_{n+3}, u_j)$, $(v_{n-2}, u_j)$, \ldots, $(v_{2n-2}, u_j)$, $(v_3, u_j)$,  $(v_{2n-1}, u_j)$, $(v_2, u_j)$, $(v_{2n}, u_j)$, $(v_1, u_j)$, $(v_0, u_j)$,
		thereby obtaining the vertex ordering of \( S_n +_Q H \) along the spine.

		\text{Page 1: the edge } $\{(v_i,u_h)(v_j,u_h) \mid i + j = \text{1 } (\text{mod } n + 1) \text{ and } 0 \leq i, j \leq n \text{ and } u_h \in V(H)\}$ and $\{(v_k,u_h)(v_{2n-k+1},u_h) \mid 2k = \text{1 } (\text{mod } n + 1) \text{ and } 1 \leq k \leq n \text{ and } u_h \in V(H)\}$ and $\{(v_{i},u_h)(v_{i},u_g) \mid u_hu_g \in E \text{ and } i + 1 = 2n + 1\}$.
		
		\text{Page 2: the edge } $\{(v_i,u_h)(v_j,u_h) \mid i + j = \text{2 } (\text{mod } n + 1) \text{ and } 0 \leq i, j \leq n \text{ and } u_h \in V(H)\}$ and $\{(v_k,u_h)(v_{2n-k+1},u_h) \mid 2k = \text{2 } (\text{mod } n + 1) \text{ and } 1 \leq k \leq n \text{ and } u_h \in V(H)\}$ and $\{(v_{i},u_h)(v_{i},u_g) \mid u_hu_g \in E \text{ and } i + 2 = 2n + 1\}$.
		
		$\ldots$ $\ldots$
		
		\text{Page $n$: the edge } $\{(v_i,u_h)(v_j,u_h) \mid i + j = \text{$n$ } (\text{mod } n + 1) \text{ and } 0 \leq i, j \leq n \text{ and } u_h \in V(H)\}$ and $\{(v_k,u_h)(v_{2n-k+1},u_h) \mid 2k = \text{$n$ } (\text{mod } n + 1) \text{ and } 1 \leq k \leq n \text{ and } u_h \in V(H)\}$ and $\{(v_{i},u_h)(v_{i},u_g) \mid u_hu_g \in E \text{ and } i + n = 2n + 1\}$.
		
		\text{Page $n+1$: the edge } $\{(v_i,u_h)(v_j,u_h) \mid i + j = \text{0 } (\text{mod } n + 1) \text{ and } 0 \leq i, j \leq n \text{ and } u_h \in V(H)\}$ and $\{(v_k,u_h)(v_{2n-k+1},u_h) \mid 2k = \text{0 } (\text{mod } n + 1) \text{ and } 1 \leq k \leq n \text{ and } u_h \in V(H)\}$ and $\{(v_0,u_h)(v_0,u_g) \mid u_hu_g \in E\}$.
		
		Since $H-E$ is still a dispersable bipartite graph, by the  method given in Theorem 3.2, the sets of copies of the edges of $H-E$ can also be placed on $\Delta(H)-1$ pages. 
		Therefore, \( S_n +_Q H \) admits a matching book embedding in \( n + \Delta(H) \) pages.
		Show that $S_n +_{Q}H$ is dispersable.(see Fig.~9)
		
		When $\Delta(H) = 1$, we have $\Delta(S_n +_{Q}H) = n+1$. 
		Let $E = E(H)$, by applying the above method, we obtain an $n+1$ page matching book embedding of \( S_n +_Q H \). 
		Hence, $S_n +_{Q} H$ is dispersable.
	\end{proof}
	
	\begin{figure}[htbp]
		\centering
		\includegraphics[height=2.6cm, width=0.9\textwidth]{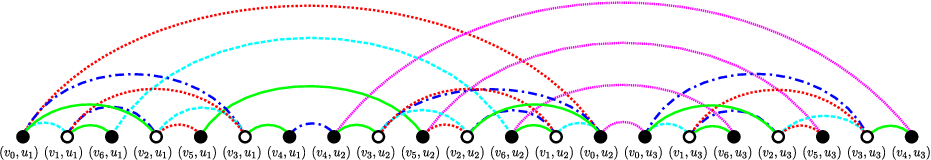}
		\centerline{Fig.~9 The matching book embedding of $S_3 {+_Q} P_3$}
	\end{figure}
	
		
	\begin{thm}
		Let \( P_n \) be a path with \( n \) vertices and let \( H \) be any dispersable bipartite graph, then \( P_n {+_Q} H \) is dispersable.
			
	\end{thm}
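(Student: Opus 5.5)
We reduce the small cases to earlier results, then build an explicit embedding for $n\ge 4$ in the style of Theorem~3.2.

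\textbf{Small cases and the lower bound.} For $n=1$ the graph $P_1+_Q H$ is just $H$, which is dispersable by hypothesis. For $n=2,3$ we use $P_2=S_1$ and $P_3=S_2$, so $P_n+_Q H=S_{n-1}+_Q H$, which is dispersable by Theorem~3.3. Now let $n\ge 4$. Label the black vertices of $Q(P_n)$ as $b_1,\dots,b_n$ and the white ones as $w_1,\dots,w_{n-1}$, where $w_i$ corresponds to $b_ib_{i+1}$. The edges of $Q(P_n)$ are $b_iw_i$, $w_ib_{i+1}$ and $w_iw_{i+1}$. Black vertices have degree at most $2$ in $Q(P_n)$ and white vertices degree at most $4$. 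Only black vertices receive edges from $H$, so
\[
\Delta(P_n+_Q H)=\max\{4,\;2+\Delta(H)\}=:D.
\]
By Lemma~2.1 we have $mbt\ge D$, so it suffices to give a matching book embedding on $D$ pages.

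\textbf{Embedding of one copy.} Order a copy of $Q(P_n)$ as $b_1,w_1,b_2,w_2,\dots,w_{n-1},b_n$. The path edges $b_1w_1,w_1b_2,b_2w_2,\dots$ are consecutive on the spine; put them alternately on pages $1$ and $2$. Each edge $w_iw_{i+1}$ jumps only over $b_{i+1}$; put it on page $3$ if $i$ is odd and on page $4$ if $i$ is even. Two such edges share an endpoint or are disjoint intervals, so each page is a crossing-free matching. Every black vertex uses only pages $1$ and $2$. Moreover, $b_j$ lies strictly under exactly one edge, namely $w_{j-1}w_j$, which is on page $3$ if $j$ is even and on page $4$ if $j$ is odd.

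\textbf{Gluing along $H$.} Take a dispersable embedding of $H$ whose pages are colour classes $c=1,\dots,\Delta(H)$, and $2$-colour $H$ red/blue. As in Theorem~3.2, replace each red vertex $u$ by the above copy $\{(x,u)\}$ and each blue vertex by the reversed copy. The inter-copy edges $(b_j,u_h)(b_j,u_g)$ belonging to one edge $u_hu_g$ of $H$ are then pairwise nested. Edges coming from distinct $H$-edges of one colour class inherit the non-crossing of $H$ blockwise.

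We assign pages according to the colour $c$ of $u_hu_g$ and the parity of $j$:
\begin{itemize}
\item colour $c=1$ goes to page $4$ when $j$ is even and to page $3$ when $j$ is odd;
\item colours $c\ge 2$ go to page $c+3$, for all $j$ (only if $\Delta(H)\ge 2$).
\end{itemize}
The pages used are $1,\dots,\max\{4,\Delta(H)+3\}$. This exceeds $D$ when $\Delta(H)\ge 2$, so the count must be sharpened. For $\Delta(H)\ge 2$ we use instead:
\begin{itemize}
\item colour $1$ on page $4$ (even $j$) or page $3$ (odd $j$);
\item colour $2$ on page $3$ (even $j$) or page $4$ (odd $j$);
\item colour $c\ge 3$ on page $c+2$.
\end{itemize}
Now exactly $2+\Delta(H)=D$ pages are used. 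In both schemes, an inter-copy edge at $b_j$ never lies on the page of the unique in-copy edge $w_{j-1}w_j$ spanning $b_j$. Each black vertex has at most one edge per page, since its in-copy edges use pages $1,2$ and its $H$-edges are injectively assigned to pages $\ge 3$.

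\textbf{Main obstacle: crossing checks.} The main work is checking that nothing crosses on pages $3$ and $4$. There are three kinds of pairs to check.
\begin{itemize}
\item Two inter-copy edges on a common page come from a single colour class of $H$ (restricted to black vertices of one parity). They therefore form a sub-family of a nested, non-crossing family.
\item An inter-copy edge at $(b_j,u)$ and an in-copy edge $w_iw_{i+1}$ of the copy of $u$ on the same page can cross only if $b_j$ lies strictly inside $w_iw_{i+1}$, i.e.\ $i=j-1$. That case is excluded by the parity rule.
\item In-copy edges of different copies occupy disjoint spine intervals, so they never cross.
\end{itemize}
Once these checks are done, we obtain a $D$-page matching book embedding, so $P_n+_Q H$ is dispersable.
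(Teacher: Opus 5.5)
There is a genuine gap in the case $\Delta(H)\ge 2$, which is the main case. Your sharpened assignment puts colour $2$ on page $3$ at even $j$ and on page $4$ at odd $j$. But you computed yourself that the in-copy edge $w_{j-1}w_j$ passing over $b_j$ lies on page $3$ for even $j$ and on page $4$ for odd $j$. So at every interior black vertex ($2\le j\le n-1$) the colour-$2$ edge is on exactly the page of the spanning edge, and by your own crossing criterion the two cross. Your sentence ``in both schemes, an inter-copy edge at $b_j$ never lies on the page of $w_{j-1}w_j$'' is therefore false for the second scheme. Concretely, take $n=4$ and $H=P_3=u_1u_2u_3$ with $u_2u_3$ in colour class $2$ and $u_3$ red: the edges $(b_2,u_2)(b_2,u_3)$ and $(w_1,u_3)(w_2,u_3)$ are both on page $3$ and cross.

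No cleverer assignment can rescue your spine order. At an interior vertex $(b_j,u)$ the three edges $w_{j-1}b_j$, $b_jw_j$, $w_{j-1}w_j$ pairwise share an endpoint, so they occupy three distinct pages. The first two pages are forbidden to inter-copy edges at $(b_j,u)$ by the matching condition, and the third by crossing. That leaves only $D-3=\Delta(H)-1$ pages for the $\Delta(H)$ inter-copy edges at $(b_j,u)$ when $d_H(u)=\Delta(H)$. So blocks of the form $b_1,w_1,b_2,\dots,w_{n-1},b_n$ give at best $\Delta(H)+3$ pages, and your argument proves the theorem only for $\Delta(H)\le 1$.

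The missing idea is a spine order of $Q(P_n)$ in which all in-copy edges incident to or passing over black vertices lie on just two pages; then those two pages are the only ones ever forbidden to inter-copy edges. The paper's labelling achieves this. Order each red copy as $b_1,b_2,\dots,b_n,w_{n-1},\dots,w_1$ (its $v_1,\dots,v_n,v_{n+1},\dots,v_{2n-1}$) and each blue copy in reverse. In this order:
\begin{itemize}
\item the edges $b_iw_i$ form a nested family on page $1$ (the paper's $i+j=2n$);
\item the edges $w_ib_{i+1}$ form a nested family on page $2$ (the paper's $i+j=2n+1$);
\item the white--white edges join spine-consecutive white vertices, alternating on pages $3,4$, so they pass over no black vertex.
\end{itemize}
The black--white edges do pass over black vertices, but only on pages $1,2$, which inter-copy edges never use. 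Hence two colour classes $E_1,E_2$ of $H$ can go on pages $3,4$ at every black vertex, and the remaining $\Delta(H)-2$ classes on new pages, for $\Delta(H)+2$ pages in total. The rest of your crossing analysis carries over unchanged: contiguous blocks, reversal on blue vertices making the copies of one $H$-edge nested, and non-crossing inherited from a page of $H$.
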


	\begin{proof}
		
		We label and color the vertices on the spine of a matching book embedding of \( H \) in the same way as in Theorem~3.3.
		Choose any two pages of this embedding, and denote their edge sets by $E_1$ and $E_2$, respectively. 
		In particular, if $\Delta(H) = 1$, then at least one of $E_1$ or $E_2$ is empty,
		Without loss of generality, we assume $E_2 = \emptyset$.
		
		When \( n \leq 3 \), the path \( P_n \) can be viewed as the star \( S_{n-1} \). 
		Hence, by Theorem~3.3, \( P_n +_Q H \) is dispersable. 
		We now present a matching book embedding of \( P_n +_Q H \) for the case \( n \geq 4 \).
		 
		We first describe a matching book embedding of $P_n +_Q H$ for the case $\Delta(H) \geq 3$. 
		By Lemma~3.3 and the definition of the \( Q \)-sum, we have $\Delta(Q(P_n)) = 4$ and $\Delta(P_n +_Q H) = \Delta(H) + 2$. 
		By Lemma~2.1, it follows that $mbt(P_n +_Q H) \geq 2 + \Delta(H)$. 
		In \( Q(P_n) \), we label the black vertices from left to right as \( v_1, v_2, v_3, \dots, v_n \), 
		and label the white vertices in the reverse order as \( v_{n+1}, v_{n+2}, \dots, v_{2n-1} \).(see Fig.~10)
		
		\begin{figure}[htbp]
			\centering
			\includegraphics[height=1.5cm, width=0.25\textwidth]{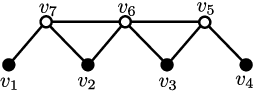}
			\centerline{Fig.~10 $Q(P_4)$}
		\end{figure}
		
		We replace each red vertex \( u_i \) on the spine of \( H \) with the sequence
		$(v_1, u_i)$, $(v_2, u_i)$, $(v_3, u_i)$, \ldots, $(v_{n-1}, u_i)$, $(v_n, u_i)$,
		and replace each blue vertex \( u_j \) on the spine of \( H \) with the reversed sequence
		$(v_n, u_j)$, $(v_{n-1}, u_j)$, \ldots, $(v_3, u_j)$, $(v_2, u_j)$, $(v_1, u_j)$,
		thereby obtaining the vertex ordering of \( P_n +_Q H \) along the spine.
		
		\text{Page 1: the edge } $\{(v_i,u_h)(v_j,u_h) \mid i + j = 2n \text{ and } 1 \leq i, j \leq 2n - 1 \text{ and } u_h \in V(H)\}$.
		
		\text{Page 2: the edge } $\{(v_i,u_h)(v_j,u_h) \mid i + j = 2n + 1 \text{ and } 1 \leq i, j \leq 2n - 1 \text{ and } u_h \in V(H)\}$.
		
		\text{Page 3: the edge } $\{(v_i,u_h)(v_{i+1},u_h) \mid i \geq n+1 \text{ and } i \text{ is an odd}\}$ and $\{(v_i,u_h)(v_i,u_g) \mid 1 \leq i \leq n \text{ and } u_hu_g \in E_1\}$.
		
		\text{Page 4: the edge } $\{(v_i,u_h)(v_{i+1},u_h) \mid i \geq n+1 \text{ and } i \text{ is an even}\}$ and $\{(v_i,u_1)(v_i,u_2) \mid 1 \leq i \leq n \text{ and } u_hu_g \in E_2\}$.
		
		Since $H-(E_1 \cup E_2)$ is still a dispersable bipartite graph, by the method given in Theorem 3.1, the sets of copies of the edges of $H-(E_1 \cup E_2)$ can also be placed on $\Delta(H)-2$ pages. 
		Therefore, \( P_n +_Q H \) admits a matching book embedding in \( \Delta(H) + 2 \) pages.
		Show that $P_n +_{Q} H$ is dispersable.(see Fig.~11)
		
		When $\Delta(H) = 1$, we have $\Delta(P_n +_{Q} H) = 4$. 
		Let $E_1 = E(H)$ and $E_2 = \emptyset$, by applying the above method, we obtain a 4-page matching book embedding of \( P_n +_Q H \). 
		Hence, $P_n +_{Q} H$ is dispersable.
		
		When $\Delta(H) = 2$, we have $\Delta(P_n +_Q H) = 4$. 
		Let $E_1 \cup E_2 = E(H)$, by applying the above method, we obtain a 4-page matching book embedding of \( P_n +_Q H \). 
		Hence, $P_n +_{Q} H$ is dispersable.
	\end{proof}
		
		\begin{figure}[htbp]
			\centering
			\includegraphics[height=2.9cm, width=1\textwidth]{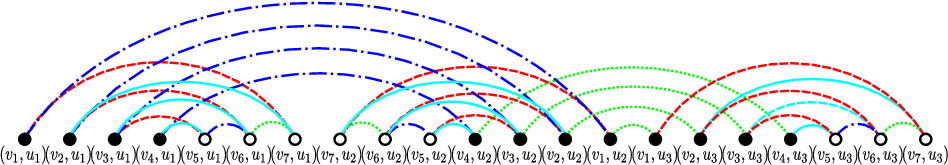}
			\centerline{Fig.~11 The matching book embedding of $P_4 {+_Q} P_3$}
		\end{figure}

	Theorem~3.3 and Theorem~3.4 show that for paths and stars, we have
	\(
	mbt(G +_Q H) \leq \Delta(G) + \Delta(H).
	\)
	However, this inequality does not hold for the \( Q \)-sum of all simple graphs \( G \) and dispersable bipartite graphs \( H \).
	Indeed, the following result shows that
	\(
	mbt(C_p +_Q C_q) = 5 > \Delta(C_p) + \Delta(C_q),
	\)
	where \( q \) is even.
	
	\begin{thm}       
		\( C_p +_Q C_q \) (with \( q \) even) is nearly dispersable.
	\end{thm}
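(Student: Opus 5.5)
The lower bound comes from regularity and Lemma~2.2, and the upper bound from an explicit $5$-page embedding. For the degrees, write the vertices of $Q(C_p)$ as black vertices $b_1,\dots,b_p$ and white vertices $w_1,\dots,w_p$, where $w_i$ corresponds to the edge $b_ib_{i+1}$ (indices mod $p$). In $Q(C_p)$ each $w_i$ is adjacent to $b_i$, $b_{i+1}$, $w_{i-1}$ and $w_{i+1}$, so it has degree $4$. Each $b_i$ has degree $2$ in $Q(C_p)$. In $C_p +_Q C_q$ every black vertex gains two more edges from the copy of $C_q$ through it, so $C_p +_Q C_q$ is $4$-regular with $\Delta=4$. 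It contains the triangle $b_{i+1}w_iw_{i+1}$, so it is not bipartite. By Lemma~2.2 it is not dispersable, and with Lemma~2.1 this gives $mbt(C_p+_QC_q)\ge 5$.

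For the upper bound I want a $5$-page matching book embedding. Theorem~3.2 alone only gives $mbt(Q(C_p))+2=6$, since $Q(C_p)$ is outerplanar and not an odd cycle and hence has $mbt=4$ by Theorem~3.1. To save one page I use that black vertices have degree only $2$ inside $Q(C_p)$. Since $q$ is even, $C_q$ is dispersable: it has a $2$-page matching embedding whose pages $A$ and $B$ are its two perfect matchings, with vertices coloured red and blue alternately. I first look for a $4$-page matching book embedding of $Q(C_p)$ with a fixed spine order satisfying:
\begin{itemize}
\item[(i)] page $4$ contains no edge incident with a black vertex, so it carries only white--white edges;
\item[(ii)] no page-$4$ edge has a black vertex strictly between its endpoints on the spine.
\end{itemize}
Given this, I copy the order into each red vertex of $C_q$ and reverse it for each blue vertex, exactly as in Theorem~3.2. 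All copies of $Q(C_p)$ then go on pages $1$--$4$. The black inter-copy edges coming from the matching $A$ go on page $5$, as in the proof of Theorem~3.2; reversal makes them nest, so there is no crossing. The black inter-copy edges from $B$ go on page $4$. The matching condition holds there by (i), since no black vertex has another page-$4$ edge. Non-crossing follows from nesting together with (ii): a page-$4$ edge inside a copy lying strictly between the two endpoint copies is enclosed, and a page-$4$ edge inside an endpoint copy cannot separate that copy's black vertex from the outside.

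The main obstacle is building the $4$-page embedding of $Q(C_p)$ with properties (i) and (ii). My plan is to place the spine as one black block followed by one white block. I order the white vertices $w_1,\dots,w_p$ (or a zigzag of them) so that the white cycle's edges form nested arcs. The black vertices go at one end in mirrored order, so that each $b_{i+1}$ with its two edges to $w_i,w_{i+1}$ forms a nested structure, as in the star constructions of Corollary~3.3. The edges $b_{i+1}w_i$ and $b_{i+1}w_{i+1}$ then have to fit into pages $1$--$3$ together with part of the white cycle, while page $4$ takes a matching of the white cycle. When $p$ is even, the white cycle splits into two perfect matchings. One of them goes to page $4$; the other, together with the black spokes, must be $3$-coloured on pages $1$--$3$ without crossings. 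When $p$ is odd, one extra white edge has to be pushed into pages $1$--$3$, so I treat $p$ odd as a separate case and adjust the ordering near one end. I would verify both cases on small examples ($p=3,4,5$) before writing the general index rules in the page-by-page style of Theorems~3.3 and~3.4. This yields $mbt(C_p+_QC_q)=5=\Delta+1$.
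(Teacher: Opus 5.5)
The lower bound is correct and matches the paper (the graph is $4$-regular and contains a triangle, so Lemma~2.2 and Lemma~2.1 give $mbt\ge 5$). The reduction for the upper bound is also sound as a reduction: a $4$-page matching book embedding of $Q(C_p)$ with (i) and (ii), copied and reversed along $C_q$, would give $5$ pages. The gap is that this embedding is never built, and for odd $p$ it cannot exist.

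Each white vertex of $Q(C_p)$ has degree $4$. In a $4$-page matching book embedding it therefore has exactly one edge on every page, including page $4$. By (i) that edge lies on the white $p$-cycle, so page $4$ would be a perfect matching of $C_p$, which is impossible for odd $p$. Your remedy of ``pushing one extra white edge into pages $1$--$3$'' does not address this, because the obstruction is that some white vertex has no admissible page-$4$ edge at all.

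For even $p$ the construction is also only announced. The layout you suggest (all black vertices, then all white vertices) already fails for $p=4$:
\begin{itemize}
\item A page whose spokes cover all four white vertices is forced to be the unique nested matching, so at most one of pages $1$--$3$ is spoke-only.
\item Hence the two white edges not on page $4$ lie on different pages, each together with two nested spokes.
\item Checking the two ways of completing the spokes shows that one of these spoke pairs always crosses.
\end{itemize}
A slightly longer case analysis shows that for $p=4$ no spine order admits a $4$-page embedding with (i) and (ii). So the reduction is too strong, not merely unfinished.

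The missing idea, which is what the paper does, is to let \emph{both} pages that receive inter-copy edges also carry white--white edges of the copies. The paper's construction is as follows:
\begin{itemize}
\item Each copy is ordered as the black vertices $v_1,\dots,v_p$ followed by the white vertices in cycle order, reversed for blue vertices of $C_q$.
\item The spokes go on two nested pages, $i+j\equiv 0$ and $i+j\equiv 1 \pmod{2p}$, and the single leftover spoke $v_pv_{2p}$ goes on page $3$.
\item The white cycle is spread over pages $3$, $4$ and $5$, with page $5$ receiving only $v_{2p-1}v_{2p}$.
\end{itemize}
Pages $4$ and $5$ then contain only white--white edges whose arcs enclose no black vertex. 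So $E_1$ and $E_2$ can be added to them by exactly your nesting argument. Because a white vertex no longer needs an edge on page $4$, the perfect-matching obstruction disappears, and the same construction handles both parities of $p$.
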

	
	\begin{proof}
		By Lemma~3.3, every white vertex in \( Q(C_p) \) has degree 4, and every black vertex has degree 2. 
		According to the definition of the \( Q \)-sum, it follows that \( C_p +_Q C_q \) is a 4-regular graph containing an odd cycle. 
		Since bipartite graphs contain no odd cycles, Lemma~2.2 implies that
		\(
				mbt(C_p +_Q C_q) \geq 5.
		\)
		We now present a matching book embedding of \( C_p +_Q C_q \).
		
		Let the vertex ordering along the spine of a matching book embedding of \( C_q \) be denoted by \( u_1, u_2, \ldots, u_n \), 
		and let the vertices of \( C_q \) be colored red and blue.
		Since \( mbt(C_q) = 2 \), let \( E_1 \) and \( E_2 \) denote the edge sets of the two pages in a matching book embedding of \( C_q \). Then
		\(
		E_1 \cup E_2 = E(C_q) \text{ and } E_1 \cap E_2 = \emptyset.
		\)
		
		In \( Q(C_p) \), we label the black vertices clockwise as \( v_1, v_2, v_3, \dots, v_p \). 
		Starting from the white vertex corresponding to the edge \( v_p v_{p-1} \), 
		we label the white vertices counterclockwise as \( v_{p+1}, v_{p+2}, \dots$ $, v_{2p} \). (see Fig.~12)
		
		We replace each red vertex \( u_i \) on the spine of \( C_q \) with the sequence
		$(v_{1}, u_{i})$, $(v_{2}, u_{i})$, $(v_{3}, u_{i})$, $\cdots$, $(v_{p-1}, u_{i})$, $(v_{p}, u_{i})$, 
		and replace each blue vertex \( u_j \) on the spine of \( C_q \) with the sequence
		$(v_{p}, u_{j})$, $(v_{p-1}, u_{j})$, $(v_{p-2}, u_{j})$ $\cdots$, $(v_{2}, u_{j})$, $(v_{1}, u_{j})$, 
		thereby obtaining the vertex ordering of \( C_p +_Q C_q \) along the spine.

		(1) $p$ is even
		
		\text{Page 1: the edge } $\{(v_i,u_h)(v_j,u_h) \mid i + j = \text{0 } (\text{mod } 2p) \text{ and } 0 \leq i, j \leq 2p \text{ and } u_h \in V(C_q)\}$ 
		
		\text{Page 2: the edge } $\{(v_i,u_h)(v_j,u_h) \mid i + j = \text{1 } (\text{mod } 2p) \text{ and } 0 \leq i, j \leq 2p \text{ and } u_h \in V(C_q)\}$ 
		
		\text{Page 3: the edge } $\{(v_p,u_h)(v_{2p},u_h) \mid u_h \in V(C_q)\}$ and $\{(v_i,u_h)(v_{i+1},u_h) \mid p+1 \leq i \leq 2p-3 \text{ and } i \text{ is } odd \text{ and } u_h \in V(C_q)\}$.
		
		\text{Page 4: the edge } $\{(v_{p+1},u_h)(v_{2p},u_h) \mid u_h \in V(C_q)\}$ and $\{(v_i,u_h)(v_{i+1},u_h) \mid p+2 \leq i \leq 2p-2 \text{ and } i \text{ is } even \text{ and } u_h \in V(C_q)\}$ and  $\{(v_{i},u_h)(v_{i},u_g) \mid 1 \leq i \leq p \text{ and } u_hu_g \in E_1\}$.
		
		\text{Page 5: the edge } $\{(v_{2p-1},u_h)(v_{2p},u_h) \mid u_h \in V(C_q)\}$ and  $\{(v_{i},u_h)(v_{i},u_g) \mid 1 \leq i \leq p \text{ and } u_hu_g \in E_2\}$.
		
		(2) $p$ is odd
		
		\text{Page 1: the edge } $\{(v_i,u_h)(v_j,u_h) \mid i + j = \text{0 } (\text{mod } 2p) \text{ and } 0 \leq i, j \leq 2p \text{ and } u_h \in V(C_q)\}$ 
		
		\text{Page 2: the edge } $\{(v_i,u_h)(v_j,u_h) \mid i + j = \text{1 } (\text{mod } 2p) \text{ and } 0 \leq i, j \leq 2p \text{ and } u_h \in V(C_q)\}$ 
		
		\text{Page 3: the edge } $\{(v_p,u_h)(v_{2p},u_h) \mid u_h \in V(C_q)\}$ and $\{(v_i,u_h)(v_{i+1},u_h) \mid p+1 \leq i \leq 2p-2 \text{ and } i \text{ is } even \text{ and } u_h \in V(C_q)\}$.
		
		\text{Page 4: the edge } $\{(v_{p+1},u_h)(v_{2p},u_h) \mid u_h \in V(C_q)\}$ and $\{(v_i,u_h)(v_{i+1},u_h) \mid p+2 \leq i \leq 2p-3 \text{ and } i \text{ is } odd \text{ and } u_h \in V(C_q)\}$ and  $\{(v_{i},u_h)(v_{i},u_g) \mid 1 \leq i \leq p \text{ and } u_hu_g \in E_1\}$.
		
		\text{Page 5: the edge } $\{(v_{2p-1},u_h)(v_{2p},u_h) \mid u_h \in V(C_q)\}$ and  $\{(v_{i},u_h)(v_{i},u_g) \mid 1 \leq i \leq p \text{ and } u_hu_g \in E_2\}$.
		
		Therefore, \( C_p +_Q C_q \) is nearly dispersable.(see Fig.~13)
	\end{proof}		
	
	\begin{figure}[htbp]
		\centering
		\includegraphics[height=2cm, width=0.4\textwidth]{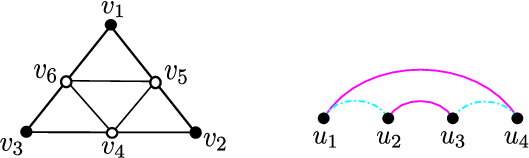}
		\centerline{Fig.~12  $Q(C3)\text{ and }C4$}
	\end{figure}
	
	\begin{figure}[htbp]
		\centering
		\includegraphics[height=3.7cm, width=1\textwidth]{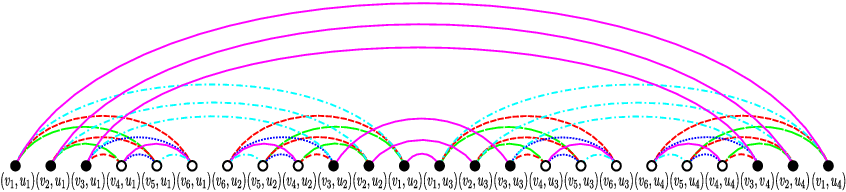}
		\centerline{Fig.~13  The matching book embedding of $C_3 +_Q C_4$}
	\end{figure}

	\section{Conclusions}
		
		In this paper, we show that outerplanar(except odd cycles) graphs is dispersable. 
		Additionally, we establish an upper bound on the matching book thickness of the $F$-sum of any simple graph 
		and any dispersable bipartite graph. 
		Meanwhile, for $F \in \{S, R, T\}$, the upper bound provided in Theorem~3.2 is tight, we investigate the relationship between $\Delta(G +_ Q H)$ and $mbt(Q(G)) + \Delta(H)$. However, we have not yet found suitable graphs $G$ and $H$ such that
		$mbt(G +_Q H) = mbt(Q(G)) + \Delta(H).$
		Moreover, we observe that $C_p +_Q C_q$ (with $q$ odd) is a $4$-regular graph  containing an odd cycle. This naturally leads us to propose the following question.
		
		\noindent\textbf{Question:} 
		\begin{enumerate}
			\item Is \( C_p +_Q C_q \) (with $q$ odd) nearly dispersable?
			\item For the graph \( G +_Q H \), is the upper bound on the matching book thickness given in Theorem~3.2 tight?
		\end{enumerate}
		
		\section*{Acknowledgment}
			We would like to thank the anonymous referee for very helpful comments and suggestions. 	
			This work was supported by the National Natural Science Foundation of China (No.~12571345).

	\end{document}